\newtheorem{definition}{Definition}
\newtheorem{theorem}{\bf Theorem}[section]
\newtheorem{remark}{\bf Remark}[section]
\newtheorem{lemma}{Lemma}[section]
\newtheorem{corollary}{Corollary}[section]
\newtheorem{example}{Example}[section]
\newtheorem{algorithm}{Algorithm}[section]
\title{Time-changed Poisson processes of order $k$}
\begin{document}
	\author{\small Ayushi S. Sengar}
	\address{\small Department of Mathematics\\
		Indian Institute of Technology Madras, Chennai 600036, INDIA.}
	\email{ma15d201@smail.iitm.ac.in}
	\author{\small A. Maheshwari}
	\address{\small Operations Management and Quantitative Techniques Area\\
		Indian Institute of Management Indore, Indore 453556, INDIA.}
	\email{adityam@iimidr.ac.in}
	\author{\small N. S. Upadhye}
	\address{\small Department of Mathematics\\
	Indian Institute of Technology Madras, Chennai 600036, INDIA.}
	\email{neelesh@iitm.ac.in}
	 \subjclass[2010]{60G55; 60G51}
	\keywords{Poisson process of order $k$, L\'evy subordinator, inverse L\'evy process, ruin, simulation.}

\begin{abstract}
\noindent In this article, we study the Poisson process of order $k$ (PPoK) time-changed with an independent L\'evy subordinator and its inverse, which we call respectively, as TCPPoK-I and TCPPoK-II, through various distributional properties, long-range dependence and limit theorems for the PPoK and the TCPPoK-I. Further, we study the governing difference-differential equations of the TCPPoK-I for the case inverse Gaussian subordinator. Similarly, we study the distributional properties, asymptotic moments and the governing difference-differential equation of TCPPoK-II. As an application to ruin theory, we give a governing differential equation of ruin probability in insurance ruin using these processes. Finally, we present some simulated sample paths of both the processes.
\end{abstract}
\maketitle
\section{Introduction}

\noindent
Poisson process can be considered as a core object of applied probability, due to its simplicity and applicability in modelling count data, which led to evolution and generalization of Poisson processes in several directions. For example, non-homogeneous Poisson processes, Cox point processes, higher
dimensional Poisson processes, and for last two decades, the fractional (time-changed) variants of Poisson processes (see \cite{lask,mnv,beghinejp2009,sfpp} and references therein) have caught the attention of the researchers and a vast literature is available on this topic.
In particular, insurance models generally use Poisson process to model the arrival of claims with a limitation of not having more than one claim in a certain small time interval. However, the claim arrival in group insurance schemes may contain more than one claims. To overcome this difficulty, Kostadinova and Minkova (2012) \cite{Poiss-order-k} introduced a variant known as Poisson process of order $k$, which models the claim arrival in groups of size $k$, where the number of arrivals in a group is uniformly distributed over $k$ points. Further, in case of calamities, the time period between two claims may not have exponential distribution, as these are extreme events and can not be modelled by Poisson process of order $k$ (as defined in \cite{Poiss-order-k}). Hence there is a need to generate a new stochastic process which is a generalization of Poisson process of order $k$. 
\vskip 2ex
\noindent
Among various techniques to create a new process, the technique of subordination (or time-change) introduced by Bohner \cite{Bochner} has gained significant attention in recent years. The theory of subordinated processes is explored in detail in \cite{sato}. A subordinated stochatic process can be generated by replacing time of the original process with a stochastic process preferably having non-decreasing sample paths. In literature, various examples of subordinated processes are discussed, and shown to have interesting probabilistic properties and elegant connections to fractional calculus, see e.g. \cite{Allouba02, Allouba-Zheng01, Baeum-Meersch-Nane09, beghinejp2009, Hahn-Kobaya-Umarov11, Sato2001}. 
In paricular, recently, subordinated Poisson processes are studied by several authors (see  \cite{Kumar-TCPP, sfpp, fnbpfp, lrd2016, TCFPP-pub, OrsToa-Berns}).
Also, these processes are extensively used in several areas, such as physics  \cite{sub:phy1,sub:phy2,sub:phy3,sub:phy4,Barn-Niel97,Barn-Niel98}, ecology \cite{sub:eco}, biology \cite{sub:bio},
hydrology \cite{Meersch-Koz-Molz-Lu04} and finance \cite{subordinator:fin1,subordinator:fin2,subordinator:fin3,subordinator:fin4,Mandelbrot01}.
However, to the best of our knowledge, subordinated Poisson processes of order $k$ have not been explored. 
\vskip 2ex
\noindent
In this article, the main goal is to explore time-changed Poisson process of order $k$ with L\'evy subordinator (increasing L\'evy process) and its right-continuous inverse, as the transition probabilities of the new process with L\'evy subordinator allow us to have more than one arrivals in a small interval of time which is useful in modelling the count data occurring in lumps.


\vskip 2ex
\noindent
The article is organized as follows. Section \ref{sec:prelim} deals with some preliminary definitions and results. In Sections \ref{sec:tcfppokI} and \ref{sec:tcfppokII}, Poisson process of order $k$ with a L\'evy subordinator and its right-continuous inverse are studied, respectively. The governing equations for the time-changed Poisson process of order $k$ are given in Section \ref{sec:dde}. Section \ref{sec:appl} discusses an application in ruin theory. Finally, some simulation algorithms to generate the sample paths of these processes are presented in Section \ref{sec:simu}.

\section{Preliminaries}\label{sec:prelim}
\noindent
In this section, we state some relevant definitions and results related to Poisson process of order $k$ and L\' evy subordinator.
\subsection{Poisson distribution of order $k$}
The early work on the distributions of order $k$ started with defining the notion geometric distribution of order $k$ (see \cite{phili83-geo}) which denotes the number of trials until the first occurrence of $k$ consecutive successes in a sequence of independent Bernoulli trials. The probability distribution of the sum of independent and identically distributed (IID) random variables having geometric distribution of order $k$ is called negative binomial distribution of order $k$ (NBoK). Let $Y_n$ denote NBoK, then the limiting distribution of $\{Y_n-kn\}$ as $n\to\infty$ is termed as Poisson distribution of order $k$  (PoK) (see \cite[Theorem 3.2]{phili83-geo}).

\begin{definition}
Let $x_1,x_2,\ldots,x_k$ be non-negative integers and  
 $\zeta_k = x_1+x_2+\ldots+x_k$, $\Pi_k! = x_1!x_2!\ldots x_k!$ and  
 \begin{equation}\label{index}
\Omega(k,n):=\left\{{\bf x} = (x_1,x_2,\ldots,x_k)\big|x_1+2x_2+\ldots+kx_k=n\right\}.
\end{equation}
Also, let $N^{(k)}$ follow PoK with rate parameter $\lambda>0$, then the probability mass function (\textit{pmf}) is given by
\begin{equation*}\label{def:poiss-dist-k-1}
\mathbb{P}[N^{(k)}=n]=\sum_{{\bf x} \in \Omega(k,n)} e^{-k\lambda } \frac{\lambda^{\zeta_k}}{\Pi_k!},~ n=0,1,\ldots.
\end{equation*}
\end{definition}
\noindent The probability generating function (\textit{pgf}) is given by (see \cite[Lemma 2.2]{phil1984})
\begin{equation}\label{pgf:poiss-dist-k-1}
G_{N^{(k)}}(s)=e^{-\lambda\left(k-\sum_{i=1}^{k}s^i\right)}.
\end{equation}
It is also known that (see \cite{Poiss-order-k}) the PoK has the following compound Poisson representation 
\begin{equation}\label{def:poiss-dist-k-2}
N^{(k)}\stackrel{d}{=} \sum_{i=0}^N X_i,
\end{equation}
where $N$ is Poisson random variable with rate parameter $k\lambda>0$, $X_0 \equiv 0$,  and $\{X_i\}_{i\ge 1}$is a sequence of IID discrete uniform random variable with \textit{pmf} given by $\mathbb{P}[X_i=j]=1/k,~ j=1,2,\ldots,k$, which is independent of $N$. Then the \textit{pgf} of $X_1$ is given by 
$G_{X_{1}}(s)=\frac{s}{k}\frac{1-s^k}{1-s}$, $s\in(0,1).$
Therefore, the \textit{pgf} of $N^{(k)}$ given in \eqref{def:poiss-dist-k-2} is
\begin{equation}\label{pgf:poiss-dist-k-2}
G_{N^{(k)}}(s)=G_N(G_{X_1}(s))=e^{-k\lambda(1-G_{X_1}(s))}.
\end{equation}
It can be easily seen that {\textit{pgf}} obtained in \eqref{pgf:poiss-dist-k-1} and \eqref{pgf:poiss-dist-k-2} are same.
\subsection{The Poisson process of order $k$}
\noindent The Poisson process of order $k$ (PPoK) is introduced and studied by Kostadinova and Minkova (see \cite{Poiss-order-k}) which can be defined as follows. 
\begin{definition}
Let $\{N(t,k\lambda)\}_{t\geq 0}$ denote Poisson process with rate parameter $k\lambda>0$, $X_0 \equiv 0$, and  $\{X_i\}_{i \ge 1}$ be a sequence of IID discrete uniform random variables over $k$ points. Then the PPoK, $\{N^{(k)}(t,\lambda)\}_{t\geq 0}$, is defined (see \cite{Poiss-order-k}) as 
\begin{equation*}
N^{(k)}(t,\lambda)= \sum_{i=0}^{N(t,k\lambda)}X_i,
\end{equation*}
where $\{X_i\}_{i \ge 1}$ and $\{N(t, k \lambda)\}_{t \ge 0}$ are assumed to be independent.
\end{definition}
\noindent Henceforth, for brevity, the parameter $\lambda$ is suppressed and $N^{(k)}(t, \lambda)$ is written as $N^{(k)}(t)$, when no confusion arises.
\begin{remark}
For $k=1$, the distribution of $X_i$'$s$ degenerate to Dirac-delta distribution at 1 and $\{N^{(1)}(t)\}_{t\geq0}$ reduces to the Poisson process $\{N(t)\}_{t\geq0}$.
\end{remark}

\begin{remark}
The \textit{pgf} of $N^{(k)}(t)$ is 
$G_{N^{(k)}(t)}(s)=\exp\left(-k\lambda t(1-G_{X_1}(s))\right)$,
where $G_{X_1}(s)=\frac{s}{k}\frac{1-s^k}{1-s}$ is the \textit{pgf} of $X_1$.
\end{remark}

\noindent The mean, variance and covariance function of the PPoK
are given by 
\begin{align*}
\mathbb{E}[N^{(k)}(t)] &= \frac{k(k+1)}{2} \lambda t\\ 
\mbox{Var}[N^{(k)}(t)] &= \frac{k(k+1)(2k+1)}{6} \lambda t\\ 
 \mbox{Cov}[N^{(k)}(s),N^{(k)}(t)] &= \frac{k(k+1)(2k+1)}{6} \lambda \min(s,t).
\end{align*}
Also, observe that the transition probabilities of the PPoK $\{N^{(k)}(t)\}_{t\geq0}$ are given by 
\begin{equation*}
\mathbb{P}[N^{(k)}(t+h)=n|N^{(k)}(t)=m]=\left\{
\begin{array}{ll}
1-k\lambda h+o(h)  & \mbox{if } n=m, \\
\lambda h+o(h) & \mbox{if } n=m+i,i=1,2,\ldots,k.
\end{array}
\right.
\end{equation*}

\noindent Let $p_m(t)=\mathbb{P}[N^{(k)}(t)=m],m=0,1,2,\ldots$ denote the \textit{pmf} of PPoK, then 
\begin{align}
\frac{d}{dt}p_0(t)&=-k\lambda p_0(t)\nonumber,\\
\frac{d}{dt}p_m(t)&=-k\lambda p_m(t)+\lambda\sum_{j=1}^{m\wedge k}p_{m-j}(t),~m=1,2,\ldots,\label{PPoK-DE2}
\end{align}
with initial condition $p_0(0)=1 \text{ and } p_m(0)=0,m=1,2,\ldots$ and $m\wedge k:=\min\{m,k\}.$

\noindent
Next, note that the \textit{pgf} of  $\{N^{(k)}(t)\}_{t\geq0}$ satisfies the following differential equation 
\begin{equation*}
\frac{\partial}{\partial t}G_{N^{(k)}(t)}(s)=-k\lambda[1-G_{X_1}(s)]G_{N^{(k)}(t)}(s),~~{\rm with} ~~ G_{N^{(k)}(0)}(0)=1.
\end{equation*}

\noindent The L\'evy exponent (characteristic exponent) (see \cite{ContTan2004}) of $\{N^{(k)}(t)\}_{t\geq0}$ is given by
\begin{equation*}
\psi(u)=\int_{-\infty}^\infty k\lambda(\exp(\iota uy)-1) \mu_{X_1}(dy).
\end{equation*}

\subsection{L\'evy subordinator}
A L\'evy subordinator (hereafter referred to as the subordinator) $\{D_{f}(t)\}_{t\geq0}$ is a non-decreasing L\'evy process and its Laplace transform (LT) (see \cite[Section 1.3.2]{appm}) has the form
\begin{equation}\label{subordinator-LT}
\mathbb{E}[e^{-s D_{f}(t)}]=e^{-tf(s)},
\;{\rm where}\; 
f(s)=b s+\int_{0}^{\infty}(1-e^{-s x})\nu(dx),~b\geq0, s>0,
\end{equation}
is the Bernstein function (see \cite{Bernstein-book} for more details). 
Here $b$ is the drift coefficient and $\nu$ is a non-negative L\'evy measure on positive half-line satisfying 
\begin{equation*}
\int_{0}^{\infty}(x\wedge 1)\nu(dx)<\infty~~{\rm and}~~\nu([0,\infty))= \infty
\end{equation*}
which ensures that the sample paths of $D_{f}(t)$ are almost surely $(a.s.)$  strictly increasing.
Also, the first-exit time of $\{D_f(t)\}_{t\geq0}$ is defined as
$E_{f}(t)=\inf\{r\geq 0:D_{f}(r)>t\}$,
which is the right-continuous inverse of the subordinator $\{D_f(t)\}_{t\geq 0}$.

\begin{remark} \label{rem:levysub}
Note that a L\' evy subordinator is a class of subrodinators, which is useful in generating various subordinated stochastic processes in general. Next, we include some well-known examples of L\'evy subordinators with drift coefficient $b = 0$ which are used later in the article.
\begin{enumerate}[(i)]
\item Let the L\'evy measure be $\nu(dx) = \frac{pe^{-\alpha x}}{x} dx, ~x>0,p>0,\alpha>0$ then using (\ref{subordinator-LT}), we get the Gamma subordinator $\{Y(t)\}_{t \ge 0}$ with Bernstein function $f(s)= p\log(1+\frac{s}{\alpha})$ (see \cite{ContTan2004}, p. 115).
\item Let the L\'evy measure be $\nu(dx) = c\frac{e^{-\mu x}}{x^{\alpha+1}}dx, ~x>0,c>0,\mu>0, 0<\alpha<1$ then using (\ref{subordinator-LT}), we get the Tempered $\alpha$-stable subordinator $D_{\alpha}^{\mu}(t)$ with Bernstein function $f(s)= (s+\mu)^{\alpha}-\mu^{\alpha}$ (see \cite{ContTan2004}, p. 115).
\item Let the L\'evy measure be $\nu(dx) = \frac{\delta}{\sqrt{2\pi x^3}}e^{\frac{-\gamma^2 x}{2}} dx,~x>0,\gamma>0,\delta>0 $ then using (\ref{subordinator-LT}), we get the Inverse Gaussian subordinator $G(t)$ with Bernstein function $f(s)=\delta(\sqrt{2s+\gamma^2}-\gamma)$ (see \cite{Kumar-Hitting}).
\end{enumerate}
\end{remark}
\section{Time-changed Poisson process of order $k$ - I}\label{sec:tcfppokI}

\noindent In this section, we consider the PPoK with a subordinator $\{D_f(t)\}_{t \geq 0}$, satisfying $\mathbb{E}[D_f^{\rho}(t)] < \infty$ for all $ \rho > 0 $, which can be defined as follows.
\begin{definition} The time-changed PPoK of Type-I (TCPPoK-I) is defined as
$$ \{Q_f^{(1)}(t)\} = \{N^{(k)}(D_f(t))\} ,~~ t \geq 0$$
where $ \{N^{(k)}(t)\}_{t\geq0}$ is the PPoK and is independent of the subordinator $\{ D_f(t)\}_{t\geq0}$.
\end{definition}
\noindent Next, we derive some properties of the TCPPoK-I. Let us first compute its {\textit{pmf}}.
\begin{theorem}\label{thm:tcppk-I-pmf} Let the Bernstein function $f(s)$, as defined in (\ref{subordinator-LT}), be such that $ \mathbb{E}[D_f^{\rho}(t)] < \infty$ for all $ \rho > 0 $. Then, the \textit{pmf} of the TCPPoK-I is given by
\begin{equation}\label{pmf-tcppok}
 P[Q_f^{(1)}(t) = n] = \sum_{{\bf x} \in \Omega(k,n)} \frac{ \lambda ^{\zeta_k}}{\Pi_k!} \mathbb{E}\left[  e^{-k\lambda D_f(t)} D_f^{\zeta_k}(t)\right],~~n=0,1,2,\ldots.
\end{equation}
\begin{proof}

Let $g_f(y,t)$ be  the probability density function (\textit{pdf}) of L\'evy subordinator. Then
\begin{align*}
P[Q_f^{(1)}(t) = n] = P[N^{(k)}(D_f(t)) = n] 
=& \int_0^{\infty} P[N^{(k)}(D_f(t)) = n | D_f(t)]g_f(y,t)dy 
\\ 
 =& \int_0^{\infty} \sum_{{\bf x} \in \Omega(k,n)} \frac{ e^{-k\lambda y} (\lambda y)^{\zeta_k}}{\Pi_k!} g_f(y,t)dy \\
=&  \sum_{{\bf x} \in \Omega(k,n)} \frac{ \lambda ^{\zeta_k}}{\Pi_k!} \mathbb{E}\left[  e^{-k\lambda D_f(t)} D_f^{\zeta_k}(t)\right],
\end{align*}
 which completes the proof.
\end{proof}
\end{theorem}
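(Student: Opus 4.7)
The plan is to compute the pmf by conditioning on the subordinator and using the pmf of the ordinary PPoK. First I would recall that, directly from the PoK definition in the preliminaries with rate parameter $\lambda t$ in place of $\lambda$, the PPoK has
\[
P[N^{(k)}(t) = n] = \sum_{{\bf x} \in \Omega(k,n)} e^{-k\lambda t} \frac{(\lambda t)^{\zeta_k}}{\Pi_k!},
\]
so the task reduces to integrating this against the density $g_f(y,t)$ of $D_f(t)$.

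Next, I would apply the law of total probability, splitting on the value of $D_f(t)$, and use the assumed independence of $\{N^{(k)}(t)\}_{t\ge 0}$ and $\{D_f(t)\}_{t\ge 0}$ to replace the conditional distribution of $N^{(k)}(D_f(t))$ given $D_f(t) = y$ with the unconditional pmf of $N^{(k)}(y)$. Substituting the explicit PPoK pmf above then yields
\[
P[Q_f^{(1)}(t) = n] = \int_0^\infty \sum_{{\bf x} \in \Omega(k,n)} e^{-k\lambda y}\frac{(\lambda y)^{\zeta_k}}{\Pi_k!}\, g_f(y,t)\,dy.
\]

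Finally, since $\Omega(k,n)$ is a finite index set, I would interchange the sum and the integral without any convergence issue, pull the constant $\lambda^{\zeta_k}/\Pi_k!$ outside, and recognize the remaining integral $\int_0^\infty e^{-k\lambda y} y^{\zeta_k} g_f(y,t)\,dy$ as the expectation $\mathbb{E}[e^{-k\lambda D_f(t)} D_f^{\zeta_k}(t)]$, which is finite by the hypothesis $\mathbb{E}[D_f^\rho(t)] < \infty$ for all $\rho > 0$ (in fact the factor $e^{-k\lambda D_f(t)}$ alone bounds the integrand).

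There is no real obstacle in the argument; the only point that needs a brief justification is the validity of swapping sum and integral, which is immediate because $\Omega(k,n)$ is finite, and the finiteness of the resulting expectation, which is guaranteed by the moment assumption on $D_f(t)$. The proof is therefore essentially a direct conditioning computation.
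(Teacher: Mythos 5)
Your proposal is correct and follows essentially the same route as the paper's proof: conditioning on $D_f(t)$, inserting the explicit PoK pmf with rate $\lambda y$, and recognizing the resulting integral as the expectation $\mathbb{E}[e^{-k\lambda D_f(t)}D_f^{\zeta_k}(t)]$. The only difference is that you make explicit the (trivial) justification for swapping the finite sum with the integral and the finiteness of the expectation, which the paper leaves implicit.
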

\begin{corollary}
The {\textit{pmf}} of the TCPPoK-I satisfies the normalizing condition $$ \sum_{n=0}^{\infty} P[Q_f^{(1)}(t) = n]=1.$$
\end{corollary}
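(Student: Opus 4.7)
The plan is to interchange the outer sum over $n$ with the inner sum over $\Omega(k,n)$, then pull the expectation outside, and finally recognize the resulting series as a product of exponential series that cancels the $e^{-k\lambda D_f(t)}$ factor.

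First I would observe that as $n$ ranges over $\{0,1,2,\ldots\}$ and $\mathbf{x}$ ranges over $\Omega(k,n)$, the pair $(n,\mathbf{x})$ traverses exactly all of $\mathbb{N}_0^k$ once (with $n = x_1 + 2x_2 + \cdots + kx_k$ being redundant given $\mathbf{x}$). Therefore, using Tonelli on the non-negative summand and non-negative random variable $e^{-k\lambda D_f(t)} D_f^{\zeta_k}(t)$ to swap the summation and the expectation, the total mass becomes
\begin{equation*}
\sum_{n=0}^{\infty} P[Q_f^{(1)}(t)=n] = \mathbb{E}\!\left[e^{-k\lambda D_f(t)} \sum_{\mathbf{x}\in\mathbb{N}_0^k} \frac{\lambda^{x_1+\cdots+x_k} D_f^{x_1+\cdots+x_k}(t)}{x_1!\cdots x_k!}\right].
\end{equation*}

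Next, I would factor the multi-index sum as a product of $k$ independent exponential series, each equal to $e^{\lambda D_f(t)}$. Multiplying these gives $e^{k\lambda D_f(t)}$, which cancels the leading $e^{-k\lambda D_f(t)}$, leaving $\mathbb{E}[1] = 1$ and completing the argument.

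There is essentially no hard step here: the only technical point is justifying the interchange of sum and expectation, which is immediate from Tonelli's theorem since all quantities involved are non-negative (and the moment hypothesis $\mathbb{E}[D_f^\rho(t)]<\infty$ for all $\rho>0$ already makes each term finite, so the manipulation requires no additional integrability input beyond what Theorem \ref{thm:tcppk-I-pmf} already assumes).
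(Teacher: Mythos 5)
Your proof is correct and follows essentially the same route as the paper: both rest on recognizing that $\bigcup_{n\geq 0}\Omega(k,n)$ enumerates $\mathbb{N}_0^k$ disjointly, after which the (Tonelli-justified) interchange lets the series reconstitute $e^{k\lambda D_f(t)}$ and cancel the exponential factor inside the expectation. The only difference is cosmetic: the paper reindexes and invokes the binomial theorem for the case $k=2$, leaving general $k$ to ``similar arguments,'' whereas your factorization of the multi-index sum into a product of $k$ exponential series handles all $k$ at once and is, if anything, the cleaner way to write it.
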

\begin{proof}
We first prove this result for the case $k = 2$. From (\ref{pmf-tcppok}) we have
$$ \sum_{n=0}^{\infty}  P[Q_f^{(1)}(t) = n] = \sum_{n=0}^{\infty} \sum_{{\bf x} \in \Omega(2,n)} \frac{\lambda ^{\zeta_2}}{\Pi_2!}\mathbb{E}\left[  e^{-2\lambda D_f(t)} D_f^{\zeta_2}(t)\right].$$
Set $ x_i = n_i ~ i=1,2$ and $n=x+\sum_{i=1}^2(i-1)n_i $ in the above expression. Then
\begin{align*}
\sum_{n=0}^{\infty}  P[Q_f^{(1)}(t) = n] =& \sum_{x+n_2=0}^{\infty} \sum_{\substack{n_1,n_2\geq 0\\n_1+n_2  = x}} \frac{\lambda^{n_1+n_2}}{n_1!n_2!}\mathbb{E}\left[  e^{-2\lambda D_f(t)} D_f^{n_1+n_2}(t)\right] \\ 
 =& \sum_{x=0}^{\infty} \frac{(2\lambda)^x}{x!} \mathbb{E}\left[  e^{-2\lambda D_f(t)} D_f^{x}(t)\right] \text{(using binomial theorem)} \\
 =& \int_0^{\infty}e^{-2\lambda y} \sum_{x=0}^{\infty} \frac{(2\lambda)^x}{x!} y^x g_f(y,t)dy \\ =& \int_0^{\infty}e^{-2\lambda y} e^{2\lambda y}  g_f(y,t)dy  =  \int_0^{\infty} g_f(y,t)dy  = 1.
\end{align*}
Using similar arguments one can prove for higher values of $k$.
\end{proof}
\noindent Using simple algebraic calculations, one can see that the transition probabilities of the TCPPoK-I $\{Q_f^{(1)}(t)\}_{t\geq 0}$ are given by
\begin{equation}\label{tran-prob} \mathbb{P}[Q_f^{(1)}(t+h)=n|Q_f^{(1)}(t)=m] = \begin{cases} 1-hf(k \lambda)+ o(h),&n=m \\ 
&\\-h\left(\sum\limits_{{\bf x} \in \Omega(k,i)} \frac{ (- \lambda )^{\zeta_k}}{\Pi_k!}f^{(\zeta_k)}(k\lambda)\right)  +o(h),&n=m+i,~i=1,2,\ldots\end{cases}, \end{equation}
where $f(k\lambda)$ is the Bernstein function.\\
\noindent Further, we present some interesting examples for the TCPPoK-I.

\begin{example}[Negative Binomial process of order $k$] It is known that negative binomial process can be obtained by subordinating the Poisson process with gamma process (see \cite{fnbpfp}). In a similar spirit, we can define the negative binomial process of order $k$ by subordinating PPoK with an independent gamma process $\{Y(t)\}_{t\geq0 }$ as defined in Remark \ref{rem:levysub}(i) and its \textit{pmf} is given by
$$ \mathbb{P}[N^{(k)}(Y(t)) = n] =  \sum_{{\bf x} \in \Omega(k,n)} \frac{\lambda^{\zeta_k}}{\Pi_k!}\sum_{m=0}^{\infty} \frac{(-k\lambda)^m}{m!} \frac{\Gamma(pt+\zeta_k+m)}{\alpha^{\zeta_k+m}\Gamma(pt)},~ n=0,1,2,\ldots.$$
\end{example}

\begin{example}[Poisson-tempered $\alpha$-stable process of order $k$]
Let $ \{D_{\alpha}^{\mu}(t)\}_{t\geq 0},~ \mu >0, ~0<\alpha <1$ be the tempered $ \alpha$-stable subordinator  as defined in Remark \ref{rem:levysub}(ii).
Then \textit{pmf} of the Poisson-tempered $\alpha$-stable of order $k$ is given by
$$ \mathbb{P}[N(D_{\alpha}^{\mu}(t)) = n] = \sum_{{\bf x} \in \Omega(k,n)} \frac{(\lambda )^{\zeta_k}}{\Pi_k!} e^{\mu^{\alpha}t} \sum_{m=0}^{\infty} \frac{(-k\lambda)^m}{m!} \mathbb{E}[(D_{\alpha}(t))^{\zeta_k+m}e^{-\mu D_{\alpha}(t)}], n=0,1,2,\ldots. $$
\end{example}
\begin{example} [Poisson-inverse Gaussian process of order $k$]
Let $ \{G(t)\}_{t \geq 0}$ be the inverse Gaussian subordinator  as defined in Remark \ref{rem:levysub}(iii). The  moments of $\{G(t)\}_{t\geq0} $  are given by (see \cite{fnbpfp})
$$ \mathbb{E}[G^q(t)] = \sqrt{\frac{2}{\pi}}\delta \left(\frac{\delta t}{\gamma}\right)^{q-\frac{1}{2}}te^{\delta \gamma t}K_{q-\frac{1}{2}}(\delta \gamma t),~~\delta, \gamma >0,~ t \geq 0,~ q \in (-\infty,\infty),$$
where $ K_{\nu}(z)$ is the modified Bessel function of third kind with index $ \nu $, defined by
$$ K_{\nu}(\omega) = \frac{1}{2} \int_0^{\infty} x^{\nu-1}e^{\frac{-1}{2}\omega(x+x^{-1})}dx, ~ \omega >0. $$
\\Using the above expression, we get the following
$$  \mathbb{E}[G^{\zeta_k+m}(t)] = \sqrt{\frac{2}{\pi}}\delta \left(\frac{\delta t}{\gamma}\right)^{(\zeta_k+m)-\frac{1}{2}}t e^{\delta \gamma t}K_{(\zeta_k+m)-\frac{1}{2}}(\delta \gamma t), $$
where $ \delta, \gamma >0,~ t \geq 0.$ Substituting above values of moments in Theorem \ref{thm:tcppk-I-pmf}, we get the  {\textit{pmf}} of Poisson-inverse Gaussian process of order $k$.
\end{example}
\noindent
Next, we discuss some distributional properties of TCPPoK-I.
\begin{theorem} \label{theorem:distribution}
 Let $ 0<s \leq t < \infty$, then the mean and covariance function of TCPPoK-I are as follows
	\begin{enumerate}[(i)]
		\item $  \mathbb{E}[Q_f^{(1)}(t)]= \frac{k(k+1)}{2} \lambda \mathbb{E}[D_f(t)], $
		\item $ \text{Cov}[Q_f^{(1)}(s),Q_f^{(1)}(t)] = \frac{k(k+1)(2k+1)}{6}\lambda \mathbb{E}[D_f(s)] + ( \frac{k(k+1)}{2} \lambda )^2 \text{Var}[D_f(s)],$
	\end{enumerate} 
\begin{proof}
Let $g_f(y,t)$ be  the \textit{pdf} of the L\'evy subordinator $\{D_f(t)\}_{t\geq 0}$. Then
$$ \mathbb{E}[Q_f^{(1)}(t)] = \mathbb{E}[N^{(k)}(D_f(t))]
 = \mathbb{E}[\mathbb{E}[N^{(k)}(D_f(t))|D_f(t)]] 
=
 \frac{k(k+1)}{2} \lambda \mathbb{E}[D_f(t)],$$
which proves Part (i). 

\noindent
Now, we derive the expression for covariance of TCPPoK-I. 
\ifx
\begin{align*}
\text{Var}[Q_f^{(1)}(t)] =&  \text{Var}[N^{(k)}(D_f(t))]\\ =& \mathbb{E}[(N^{(k)}(D_f(t)) - \mathbb{E}[N^{(k)}(D_f(t))])^2] \\ =&  \int_0^{\infty}  \mathbb{E}[(N^{(k)}(D_f(t)) - \mathbb{E}[N^{(k)}(D_f(t))])^2 | D_f(t) = y] g_f(y,t) dy  \\=& \int_0^{\infty}  \mathbb{E}[(N^{(k)}(y) - \mathbb{E}[N^{(k)}(D_f(t))])^2 | D_f(t) = y] g_f(y,t) dy \\ =& \int_0^{\infty}  \mathbb{E}[(N^{(k)}(y) - \mathbb{E}[N^{(k)}(D_f(t))])^2] g_f(y,t) dy \\ =& \int_0^{\infty}  \mathbb{E}[( \{N^{(k)}(y) - \frac{k(k+1)}{2}\lambda y \} +\{ \frac{k(k+1)}{2}\lambda y - \mathbb{E}[N^{(k)}(D_f(t))] \} )^2] g_f(y,t) dy \\ =& \int_0^{\infty}  \mathbb{E}[( N^{(k)}(y) - \frac{k(k+1)}{2}\lambda y )^2] g_f(y,t) dy + \\ & \int_0^{\infty} \mathbb{E}[( \frac{k(k+1)}{2}\lambda y - \mathbb{E}[N^{(k)}(D_f(t))])^2] g_(y,t)dy \\ & + 2 \int_0^{\infty}  \mathbb{E}[( N^{(k)}(y) - \frac{k(k+1)}{2}\lambda y )(\frac{k(k+1)}{2}\lambda y - \mathbb{E}[N^{(k)}(D_f(t))])] g_f(y,t) dy \\=& \int_0^{\infty}  \mathbb{E}[( N^{(k)}(y) - \frac{k(k+1)}{2}\lambda y )^2] g_f(y,t) dy + \\ & \int_0^{\infty} \mathbb{E}[( \frac{k(k+1)}{2}\lambda y -\frac{k(k+1)}{2}\lambda \mathbb{E}[D_f(t)])^2] g_f(y,t) dy + 2(0) \\ =& \int_0^{\infty} \text{Var}[N_k(y)] g_f(y,t)dy + ( \frac{k(k+1)}{2} \lambda )^2 \int_0^{\infty}\mathbb{E}[(y-\mathbb{E}[D_f(t)])^2] g_f(y,t) dy  \\ =& \frac{k(k+1)(2k+1)}{6}\lambda \int_0^{\infty} y g_f(y,t) dy + ( \frac{k(k+1)}{2} \lambda )^2 \text{Var}[D_f(t)] \\=& \frac{k(k+1)(2k+1)}{6}\lambda \mathbb{E}[D_f(t)] + ( \frac{k(k+1)}{2} \lambda )^2 \text{Var}[D_f(t)]
\end{align*}
\fi
First, we evaluate $\mathbb{E}[Q_f^{(1)}(s)Q_f^{(1)}(t)] $. 
\begin{align*}
\mathbb{E}[Q_f^{(1)}(s)Q_f^{(1)}(t)] =& \mathbb{E}[N^{(k)}(D_f(s)) N^{(k)}(D_f(t))] \\ =& \mathbb{E}[N^{(k)}(D_f(s))\{ N^{(k)}(D_f(t))- N^{(k)}(D_f(s))\}] + \mathbb{E}[(N^{(k)}(D_f(s)))^2] \\
=& \mathbb{E}[N^{(k)}(D_f(s))] \mathbb{E}[N^{(k)}(D_f(t))-N^{(k)}(D_f(s))] + \mathbb{E}[(N^{(k)}(D_f(s)))^2] \\
=& \mathbb{E}[N^{(k)}(D_f(s))] \mathbb{E}[N^{(k)}(D_f(t-s))] + \mathbb{E}[(N^{(k)}(D_f(s)))^2] \\ 
=& \frac{k(k+1)}{2}\lambda \mathbb{E}[D_f(s)]\frac{k(k+1)}{2}\lambda \mathbb{E}[D_f(t-s)] + \\
& \frac{k(k+1)(2k+1)}{6}\lambda \mathbb{E}[D_f(s)] + \left(\frac{k(k+1)\lambda}{2} \right)^2 \mathbb{E}[(D_f(s))^2],
\end{align*}
where the last equality follows from the fact that $$\mathbb{E}[(N^{(k)}(D_f(s)))^2]= \frac{k(k+1)(2k+1)}{6}\lambda \mathbb{E}[D_f(s)] + \left(\frac{k(k+1)\lambda}{2} \right)^2 \mathbb{E}[(D_f(s))^2].$$
Therefore, we get
\begin{align*}
Cov[Q_f^{(1)}(s),Q_f^{(1)}(t)] =& \mathbb{E}[Q_f^{(1)}(s)Q_f^{(1)}(t)]- \mathbb{E}[Q_f^{(1)}(s)]\mathbb{E}[Q_f^{(1)}(t)]\\
=& \frac{k(k+1)(2k+1)}{6}\lambda \mathbb{E}[D_f(s)] + \left( \frac{k(k+1)}{2} \lambda \right)^2 \text{Var}[D_f(s)].
\end{align*}
which completes the proof of Part (ii). To get the expression of variance of the TCPPoK-I, we can put $s=t$ in the Part (ii).
\end{proof}
\end{theorem}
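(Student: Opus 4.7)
The plan is to reduce both statements to known expressions for the underlying PPoK $\{N^{(k)}(t)\}$ by conditioning on the subordinator, using the independence of $\{N^{(k)}(t)\}$ and $\{D_f(t)\}$, and then exploiting the independent and stationary increments of both processes.

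For Part (i), I would write $\mathbb{E}[Q_f^{(1)}(t)] = \mathbb{E}[\mathbb{E}[N^{(k)}(D_f(t))\mid D_f(t)]]$. Because $N^{(k)}$ is independent of $D_f$ and $\mathbb{E}[N^{(k)}(y)] = \frac{k(k+1)}{2}\lambda y$, the inner conditional expectation is $\frac{k(k+1)}{2}\lambda D_f(t)$, and taking outer expectation gives Part (i). The finiteness assumption $\mathbb{E}[D_f^{\rho}(t)]<\infty$ ensures all interchanges of integration and expectation are legitimate.

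For Part (ii), let $0<s\le t<\infty$ and decompose
\begin{equation*}
Q_f^{(1)}(s)Q_f^{(1)}(t) \;=\; \bigl(Q_f^{(1)}(s)\bigr)^2 \;+\; Q_f^{(1)}(s)\bigl[Q_f^{(1)}(t)-Q_f^{(1)}(s)\bigr].
\end{equation*}
For the first summand I would condition on $D_f(s)$ and use $\mathbb{E}[(N^{(k)}(y))^2]=\mathrm{Var}[N^{(k)}(y)]+(\mathbb{E}[N^{(k)}(y)])^2 = \frac{k(k+1)(2k+1)}{6}\lambda y + \bigl(\frac{k(k+1)\lambda}{2}\bigr)^2 y^2$, giving $\mathbb{E}[(Q_f^{(1)}(s))^2]$ in terms of $\mathbb{E}[D_f(s)]$ and $\mathbb{E}[D_f^2(s)]$. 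For the second summand I would argue that since both $N^{(k)}$ and $D_f$ have independent increments and are mutually independent, the random variables $Q_f^{(1)}(s)$ and $Q_f^{(1)}(t)-Q_f^{(1)}(s)$ are independent; moreover, by stationarity, the increment is distributed as $N^{(k)}(D_f(t-s))$, so its mean is $\frac{k(k+1)}{2}\lambda \mathbb{E}[D_f(t-s)]$.

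Assembling these pieces and subtracting $\mathbb{E}[Q_f^{(1)}(s)]\mathbb{E}[Q_f^{(1)}(t)]=\bigl(\frac{k(k+1)}{2}\lambda\bigr)^2 \mathbb{E}[D_f(s)]\mathbb{E}[D_f(t)]$ from $\mathbb{E}[Q_f^{(1)}(s)Q_f^{(1)}(t)]$, the cross term $\mathbb{E}[D_f(s)]\mathbb{E}[D_f(t-s)]$ combines with $\mathbb{E}[D_f^2(s)]$ using $\mathbb{E}[D_f(s)]\mathbb{E}[D_f(t)]=\mathbb{E}[D_f(s)](\mathbb{E}[D_f(s)]+\mathbb{E}[D_f(t-s)])$, leaving precisely $\mathrm{Var}[D_f(s)]$ in front of $(\frac{k(k+1)}{2}\lambda)^2$. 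The variance formula follows by setting $s=t$. The main obstacle I expect is the careful justification of the independent-increments step for the composed process $N^{(k)}\circ D_f$; everything else is moment bookkeeping enabled by the integrability hypothesis.
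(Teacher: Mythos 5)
Your proposal is correct and follows essentially the same route as the paper: conditioning on the subordinator for the mean, and for the covariance the identical decomposition $Q_f^{(1)}(s)Q_f^{(1)}(t)=(Q_f^{(1)}(s))^2+Q_f^{(1)}(s)[Q_f^{(1)}(t)-Q_f^{(1)}(s)]$ together with the independent, stationary increments of the subordinated process and the second-moment formula for $N^{(k)}$. The independent-increments step you flag as the main obstacle is exactly the one the paper also uses (and, like you, justifies only by appeal to the Lévy structure of $N^{(k)}\circ D_f$).
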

\begin{remark}
From Theorem \ref{theorem:distribution}, it is clear that $Var[Q_f^{(1)}(t)]> \mathbb{E}[Q_f^{(1)}(t)]$.Therefore, the index of dispersion $ I(t) := Var[Q_f^{(1)}(t)]/\mathbb{E}[Q_f^{(1)}(t)]$ (see \cite{TCFPP-pub} for more details) is greater than 1. Hence, we conclude that TCPPoK-I exhibits overdispersion. 
\end{remark}
\ifx	
\subsection*{Index of Dispersion}
The index of dispersion for a counting process $ X(t)$ is defined by (see \cite{TCFPP-pub})
$$ I(t) = \frac{\text{Var}[X(t)]}{\mathbb{E}[X(t)]}.$$
The stochastic process $\{X(t)\}_{t\geq0}$ is said to be overdispersed if $I(t)>1$ for all $t\geq0$ nn.(see \cite{BegClau14})\\
Now, we claim that the TCPPoK-I $\{Q_{f}^{(1)}(t)\}_{t\geq0}$ exhibits overdispersion. Since the mean of the TCPPoK-I $\{Q_{f}^{(1)}(t)\}_{t\geq0}$ is nonnegative, hence it sufficient  to show that Var$[Q_{f}^{(1)}(t)]-\mathbb{E}[Q_{f}^{(1)}(t)]>0$.  To see this, consider
\begin{align*}
 \text{Var}[Q_f^{(1)}(t)] - \mathbb{E}[Q_f^{(1)}(t)] =& \frac{k(k+1)(2k+1)}{6}\lambda \mathbb{E}[D_f(t)] + \left( \frac{k(k+1)}{2} \lambda \right)^2 \text{Var}[D_f(t)]\\~~~~&~~~~ - \frac{k(k+1)}{2}\lambda \mathbb{E}[D_f(t)] \\ =& \frac{k(k+1)}{2}\lambda \mathbb{E}[D_f(t)] \left[\frac{2k+1}{3}-1\right] + \left( \frac{k(k+1)}{2} \lambda \right)^2 \text{Var}[D_f(t)].
 \end{align*}
  The L\'evy subordinator $ D_f(t)$ is strictly increasing \textit{a.s.} (see \cite{appm}), therefore $\mathbb{E}[D_f(t)]$ is non negative and $\left[\frac{2k+1}{3}-1\right] \geq 0,$ for  $k=1,2,3,\ldots$. Hence, the above expression is always positive which implies that TCPPoK-I $\{Q_{f}^{(1)}(t)\}_{t\geq0}$ is overdispersed.
\fi
\subsection{Long-range dependence}
Now we discuss the long-range dependence (LRD) property of the TCPPoK-I. We first need the following definitions.
\begin{definition}\label{Def:asym-equal}
Let $ f(x) $ and $ g(x)$ be positive functions. We say that $f(x)$ is asymptotically equal to $ g(x) $, written as $ f(x) \sim g(x), ~as~~x \rightarrow \infty$, if
$$ \lim_{x \rightarrow \infty} \frac{f(x)}{g(x)} = 1$$ 
\end{definition}

\begin{definition} \label{Def:LRD} (see \cite{lrd2016})
Let $ 0\leq s<t$ and $s$ be fixed. Assume a stochastic process $ \{ X(t)\}_{t \geq 0}$ has the correlation function $ Corr[X(s),X(t)]$ that satisfies
$$ c_1(s)t^{-d} \leq Corr[X(s),X(t)] \leq c_2(s)t^{-d}, $$
for large $ t,d >0,~ c_1(s)>0 ~and~ c_2(s)>0$. That is,
$$ \lim_{t \rightarrow \infty} \frac{Corr[X(s),X(t)]}{t^{-d}} = c(s)$$
for some $ c(s)>0$ and $ d>0$. We say that $ X(t)$ has the long-range dependence (LRD) property if $d \in (0,1)$ and short-range dependence (SRD) property if $ d \in (1,2)$.
\end{definition}

\noindent Now, we show that the TCPPoK-I has the LRD property.
\begin{theorem}
Let $ D_f(t) $ be such that $ \mathbb{E}[D_f(t)] \sim k_1 t^{\rho} $ and $ \mathbb{E}[(D_f(t))^2] \sim k_2 t^{2 \rho} $ for some $ 0< \rho <1 $, and positive constant $ k_1$ and  $k_2 $ with $ k_2 \geq k_1^2 $. Then the TCPPoK-I has the LRD property. 
\end{theorem}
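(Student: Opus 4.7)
The plan is to plug the moment hypotheses on $D_f(t)$ into the explicit covariance formula of Theorem~\ref{theorem:distribution}(ii) and verify Definition~\ref{Def:LRD} directly. The crucial structural observation is that, for $0<s\le t<\infty$ with $s$ held fixed, the covariance
\[
\text{Cov}[Q_f^{(1)}(s),Q_f^{(1)}(t)]=\frac{k(k+1)(2k+1)}{6}\lambda\,\mathbb{E}[D_f(s)]+\left(\frac{k(k+1)}{2}\lambda\right)^{\!2}\text{Var}[D_f(s)]
\]
does not depend on $t$ at all; denote this positive constant by $C(s)$. Consequently the entire $t$-dependence of the correlation lives in $\sqrt{\text{Var}[Q_f^{(1)}(t)]}$, and the proof reduces to pinning down its asymptotic order.

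Next I would compute that asymptotic order. Combining $\mathbb{E}[D_f(t)]\sim k_1 t^{\rho}$ with
\[
\text{Var}[D_f(t)]=\mathbb{E}[D_f^2(t)]-(\mathbb{E}[D_f(t)])^2\sim(k_2-k_1^2)\,t^{2\rho},
\]
and inserting these into the $s=t$ case of Theorem~\ref{theorem:distribution}(ii), the quadratic-in-$\mathbb{E}[D_f]$ term dominates the linear one since $2\rho>\rho$. In the generic case $k_2>k_1^2$ this yields
\[
\text{Var}[Q_f^{(1)}(t)]\sim\left(\frac{k(k+1)}{2}\lambda\right)^{\!2}(k_2-k_1^2)\,t^{2\rho},\qquad t\to\infty,
\]
hence $\sqrt{\text{Var}[Q_f^{(1)}(t)]}\sim\gamma\, t^{\rho}$ with $\gamma=\tfrac{k(k+1)}{2}\lambda\sqrt{k_2-k_1^2}$. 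Dividing,
\[
\text{Corr}[Q_f^{(1)}(s),Q_f^{(1)}(t)]=\frac{C(s)}{\sqrt{\text{Var}[Q_f^{(1)}(s)]\,\text{Var}[Q_f^{(1)}(t)]}}\sim\frac{c(s)}{t^{\rho}},
\]
where $c(s)=C(s)/(\gamma\sqrt{\text{Var}[Q_f^{(1)}(s)]})>0$. Since $\rho\in(0,1)$, Definition~\ref{Def:LRD} is met with $d=\rho$, proving the LRD property.

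The only subtlety, which I would note but not belabour, is the degenerate boundary case $k_2=k_1^2$: there $\text{Var}[D_f(t)]=o(t^{2\rho})$, so the linear $\mathbb{E}[D_f(t)]$ term in the variance formula takes over and the correlation decays like $t^{-\rho/2}$; since $\rho/2\in(0,\tfrac12)\subset(0,1)$, the LRD conclusion is intact. I do not foresee a genuine obstacle here: the argument is essentially the careful composition of the stated asymptotic equivalences through sums and quotients, with the key qualitative input being that $\text{Cov}[Q_f^{(1)}(s),Q_f^{(1)}(t)]$ is $t$-free for fixed $s$.
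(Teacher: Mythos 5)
Your proof is correct and follows essentially the same route as the paper's: establish $\mathrm{Var}[Q_f^{(1)}(t)]\sim\bigl(\tfrac{k(k+1)}{2}\lambda\bigr)^2(k_2-k_1^2)\,t^{2\rho}$, observe that $\mathrm{Cov}[Q_f^{(1)}(s),Q_f^{(1)}(t)]$ is free of $t$ for fixed $s$, and conclude that the correlation decays like $t^{-\rho}$ with $\rho\in(0,1)$. Your remark on the boundary case $k_2=k_1^2$ flags something the paper silently ignores (its constant $d_1$ vanishes there, so its asymptotic for the variance degenerates), though your quick fix is itself not airtight: $\mathrm{Var}[D_f(t)]=o(t^{2\rho})$ does not force the linear term $\mathbb{E}[D_f(t)]\sim k_1t^{\rho}$ to dominate the variance, so without further assumptions the correlation need not obey any single power law in that case.
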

\begin{proof}
 Let $ 0\leq s<t< \infty $, we have that
\begin{align*}
\text{Var}[Q_f^{(1)}(t)] =& 
\frac{k(k+1)(2k+1)}{6}\lambda \mathbb{E}[D_f(t)] + \left( \frac{k(k+1)}{2} \lambda \right)^2 \left( \mathbb{E}[D_f(t)^2] - \mathbb{E}[D_f(t)]^2 \right) \\ \sim 
& \frac{k(k+1)(2k+1)}{6}\lambda k_1 t^{\rho} + \left( \frac{k(k+1)}{2} \lambda \right)^2 \left( k_2 t^{2\rho} - (k_1 t^{\rho})^2 \right) \\ 
\sim & \left( \frac{k(k+1)}{2} \lambda \right)^2 t^{2\rho} ( k_2 - k_1^2 ) ~~~~(\text{using~Definition \ref{Def:asym-equal}}), \\ =& d_1 t^{2 \rho},
\end{align*}
where $ d_1 = \left(\frac{k(k+1)}{2} \lambda \right)^2 (k_2 - k_1^2)$. Now, we study the asymptotic behavior of the correlation function
\begin{align*}
\text{Corr}[Q_f^{(1)}(s),Q_f^{(1)}(t)] =& \frac{\text{Cov}[Q_f^{(1)}(s),Q_f^{(1)}(t)]}{\sqrt{\text{Var}[Q_f^{(1)}(s)]\text{Var}[Q_f^{(1)}(t)]}} \\
\sim & \frac{k(k+1)(2k+1)\lambda \mathbb{E}[D_f(s)] + 6 \left( \frac{k(k+1)}{2} \lambda \right)^2 \text{Var}[D_f(s)]}{6\sqrt{\text{Var}[Q_f^{(1)}(s)]} \sqrt{d_1 t^{2 \rho}}} \\=& \left( \frac{k(k+1)(2k+1)\lambda \mathbb{E}[D_f(s)] + 6 \left( \frac{k(k+1)}{2} \lambda \right)^2 \text{Var}[D_f(s)]}{6\sqrt{d_1 \text{Var}[Q_f^{(1)}(s)]}} \right) t^{-\rho},
\end{align*}
which decays like the power law $ t^{-\rho}, ~~0< \rho <1 $. Hence the TCPPoK-I exhibits the LRD property.
\end{proof}

\begin{lemma}
	The PPoK has the LRD property. 
\end{lemma}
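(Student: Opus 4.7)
The plan is straightforward: compute the correlation function of the PPoK directly from the mean, variance, and covariance formulas already recorded in the preliminaries, and then check that it decays as a power law $t^{-d}$ with $d\in(0,1)$ in the sense of Definition \ref{Def:LRD}. Since $\{N^{(k)}(t)\}_{t\ge 0}$ has a L\'evy-type linear variance in $t$ and a covariance function proportional to $\min(s,t)$, the computation should collapse to an extremely clean closed form.

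Concretely, fix $0\le s\le t<\infty$. From the expressions recorded in Section \ref{sec:prelim}, I have
\[
\mathrm{Cov}[N^{(k)}(s),N^{(k)}(t)] = \frac{k(k+1)(2k+1)}{6}\lambda\, s,
\qquad
\mathrm{Var}[N^{(k)}(u)] = \frac{k(k+1)(2k+1)}{6}\lambda\, u.
\]
Substituting these into the definition of the correlation function, the common prefactor $\tfrac{k(k+1)(2k+1)}{6}\lambda$ cancels between numerator and denominator, giving
\[
\mathrm{Corr}[N^{(k)}(s),N^{(k)}(t)] = \frac{\tfrac{k(k+1)(2k+1)}{6}\lambda\, s}{\sqrt{\tfrac{k(k+1)(2k+1)}{6}\lambda\, s}\;\sqrt{\tfrac{k(k+1)(2k+1)}{6}\lambda\, t}} = \sqrt{\frac{s}{t}} = \sqrt{s}\,\cdot t^{-1/2}.
\]

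Now I would appeal to Definition \ref{Def:LRD} with $s$ fixed, $c(s)=\sqrt{s}>0$, and $d=1/2$. Since $d=1/2\in(0,1)$, the PPoK exhibits the long-range dependence property, completing the proof. There is essentially no analytic obstacle here; the only thing worth flagging is that the result relies on the simple form of $\mathrm{Cov}[N^{(k)}(s),N^{(k)}(t)]\propto\min(s,t)$, which is a structural feature inherited from the underlying Poisson process $\{N(t,k\lambda)\}_{t\ge 0}$ via the compound-Poisson representation and the independence of $\{X_i\}$.
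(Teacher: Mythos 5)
Your proposal is correct and follows essentially the same route as the paper: both compute $\mathrm{Corr}[N^{(k)}(s),N^{(k)}(t)]=s^{1/2}t^{-1/2}$ from the covariance $\propto\min(s,t)$ and the linear variance, and then invoke Definition \ref{Def:LRD} with $d=\tfrac{1}{2}\in(0,1)$. Your write-up is slightly more explicit about where the cancellation of the prefactor $\tfrac{k(k+1)(2k+1)}{6}\lambda$ comes from, but there is no substantive difference.
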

\begin{proof}
	Let $ 0 \leq s <t < \infty $, then
	$$ Corr[N^{(k)}(s),N^{(k)}(t)] = 
	 s^{\frac{1}{2}}t^{- \frac{1}{2}} $$
	$$ \Rightarrow \lim_{t \rightarrow \infty} \frac{Corr[N^{(k)}(s),N^{(k)}(t)]}{t^{-d}} = \lim_{t \rightarrow \infty} \frac{s^{\frac{1}{2}}t^{- \frac{1}{2}}}{t^{-\frac{1}{2}}} 
	 = c(s).$$
	From the Definition \ref{Def:LRD}, we can say that the PPoK has the LRD property.
\end{proof}

\subsection{Limit theorems} In this subsection, we derive some results on limit theorems of the PPoK and the TCPPoK-I.
\begin{lemma}
Let $\{N^{(k)}(t)\}_{t\geq 0}$ be the PPoK. Then
\begin{equation}\label{lemma:limit-theorem}
\lim\limits_{t\to\infty}\frac{N^{(k)}(t)}{t}=\frac{k(k+1)}{2}\lambda,~in~probability.
\end{equation}
\begin{proof}
We know that the PPoK can be represented as sum of $k$ independent Poisson processes 
$ N_1(t),N_2(t),\ldots,N_k(t)$ (see \cite{Poiss-order-k}).
\begin{equation*}\label{def:repres}
N^{(k)}(t) \stackrel{d}{=} N_1(t) + 2 N_2(t) +3  N_3(t) +.....+k N_k(t).
\end{equation*}  
Consider
 \begin{align*}
\lim\limits_{t\to\infty}\frac{N^{(k)}(t)}{t} =& \lim\limits_{t\to\infty}\frac{N_1(t) + 2 N_2(t) +3  N_3(t) +.....+k N_k(t)}{t}, ~\text{in distribution}  \\ 
=& \lim\limits_{t\to\infty}\frac{N_1(t)}{t}+  2 \lim\limits_{t\to\infty}\frac{N_2(t)}{t} +\ldots+k \lim\limits_{t\to\infty}\frac{N_k(t)}{t}, ~\text{in distribution.} 
\shortintertext{Using the law of large numbers and as limit in distribution goes to a constant, we get}  =& \lambda +2\lambda + \ldots+ k\lambda,~ \text{in~probability}, ~\\
 =& \frac{k(k+1)}{2}\lambda,~\text{ in~probability}.\qedhere
\end{align*}
\end{proof}
\end{lemma}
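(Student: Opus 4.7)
The plan is to exploit the known decomposition of the PPoK as a weighted sum of independent ordinary Poisson processes, each of rate $\lambda$, and then push a law-of-large-numbers argument through each component. Specifically, from the compound Poisson representation and the $\textit{pgf}$ factorization $G_{N^{(k)}(t)}(s)=\exp\bigl(-\lambda t(k-\sum_{i=1}^k s^i)\bigr)=\prod_{i=1}^k \exp\bigl(-\lambda t(1-s^i)\bigr)$, we obtain the distributional identity $N^{(k)}(t) \stackrel{d}{=} \sum_{i=1}^k i\,N_i(t)$, where $N_1,\ldots,N_k$ are independent Poisson processes of rate $\lambda$. This is the representation cited from \cite{Poiss-order-k}.

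Given this representation, I would divide both sides by $t$ and apply the ordinary strong (or weak) law of large numbers to each Poisson process: $N_i(t)/t \to \lambda$ almost surely, hence in probability, as $t\to\infty$, for each fixed $i$. Since there are only finitely many ($k$) summands, the continuous mapping theorem (or just the standard fact that convergence in probability is preserved by finite linear combinations) gives
\begin{equation*}
\frac{N_1(t)+2N_2(t)+\cdots+kN_k(t)}{t} \xrightarrow{\mathbb{P}} \lambda+2\lambda+\cdots+k\lambda = \frac{k(k+1)}{2}\lambda.
\end{equation*}

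The subtle point is that the identity $N^{(k)}(t)\stackrel{d}{=}\sum_{i=1}^k iN_i(t)$ is only in distribution (equality of one-dimensional marginals), not pathwise, so convergence in probability on the right-hand side yields only convergence in distribution for $N^{(k)}(t)/t$ a priori. However, since the limit is a deterministic constant, convergence in distribution to a constant is equivalent to convergence in probability. This is the step I would flag as the main subtlety to record carefully; everything else is essentially bookkeeping.

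In summary, the proof will consist of three lines: invoke the independent-Poisson decomposition, apply the law of large numbers component-wise, and use the fact that convergence in distribution to a constant upgrades to convergence in probability. No delicate estimates are needed, and the result holds for every finite $k\ge 1$.
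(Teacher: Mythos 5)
Your proposal is correct and follows essentially the same route as the paper: the weighted independent-Poisson decomposition $N^{(k)}(t)\stackrel{d}{=}\sum_{i=1}^k iN_i(t)$, the law of large numbers applied componentwise, and the upgrade from convergence in distribution to convergence in probability because the limit is a constant. You actually articulate the distributional-versus-pathwise subtlety more explicitly than the paper does, but the underlying argument is identical.
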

\noindent Next, we prove limit theorem for TCPPoK-I. To do so, we first need the following definition.
\begin{definition} We call a function $ l:(0,\infty) \rightarrow (0,\infty)$ regularly varying at $0+$ with index $\alpha \in \mathbb{R}$ if 
	$$ \lim_{x\rightarrow 0+} \frac{l(\lambda x)}{l(x)} = \lambda^{\alpha},~  \lambda >0. $$
\end{definition}
\noindent   The following result of the law of iterated logarithm for subordinator is reproduced from \cite[Chapter III, Theorem 14]{bertoin}.
\begin{lemma}Let $D_f(t)$ be a subordinator with $\mathbb{E}[e^{-sD_f(t)}]=e^{-tf(s)}$, where $f(s)$ is regularly varying at $0+$ with index $\alpha\in(0,1)$. Let $h$ be the inverse function of $f$ and
	\begin{equation*}
	g(t)=\frac{\log\log t}{h(t^{-1}\log\log t)},~(e<t).
	\end{equation*}
	Then
	\begin{equation}\label{LIL-sub}
	\liminf_{t\to\infty}\frac{D_f(t)}{g(t)}=\alpha(1-\alpha)^{(1-\alpha)/\alpha},~a.s.
	\end{equation}
\end{lemma}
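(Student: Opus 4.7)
The plan is to follow the classical route to a law of the iterated logarithm for a nondecreasing L\'evy process: combine a sharp small-deviation estimate for $P[D_f(t)\le x]$ with a Borel--Cantelli argument along a geometric subsequence. Since only the Laplace transform $\mathbb{E}[e^{-sD_f(t)}]=e^{-tf(s)}$ is available, every step is phrased in terms of $f$ and its inverse $h$.

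The first, and technically most delicate, step is to establish the sharp small-ball asymptotic
\begin{equation*}
-\log P[D_f(t)\le x]\ \sim\ (1-\alpha)\,\alpha^{\alpha/(1-\alpha)}\,t\,f(\theta^\ast),
\end{equation*}
valid as $t\to\infty$ with $x/t\to\infty$, where $\theta^\ast=\theta^\ast(t,x)\to 0$ is chosen by the Legendre equation $f'(\theta^\ast)=x/t$. The upper bound comes from optimizing the exponential Chebyshev inequality $P[D_f(t)\le x]\le e^{\theta x-tf(\theta)}$ at $\theta=\theta^\ast$; the regular variation of $f$ with index $\alpha$ together with the monotone density theorem gives $\theta^\ast f'(\theta^\ast)/f(\theta^\ast)\to\alpha$, so $tf(\theta^\ast)-\theta^\ast x\sim (1-\alpha)\,tf(\theta^\ast)$, and one more regular-variation step extracts the prefactor $\alpha^{\alpha/(1-\alpha)}$. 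The matching lower bound is obtained by the Cram\'er-style exponential tilt $e^{-\theta^\ast D_f(t)}/\mathbb{E}[e^{-\theta^\ast D_f(t)}]$, under which $D_f(t)$ becomes another subordinator with mean exactly $x$; a second-moment estimate on the tilted law provides a uniform lower bound on the mass of a small interval around $x$, which inverts to the required lower bound on $P[D_f(t)\le x]$.

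With the small-ball estimate in hand, I would substitute $x=c\,g(t)=c\log\log t/h(t^{-1}\log\log t)$. A short calculation using the regular variation of $h$ (which has index $1/\alpha$) reduces $(1-\alpha)\alpha^{\alpha/(1-\alpha)}\,t\,f(\theta^\ast)$ to $(c/c^\star)^{-\alpha/(1-\alpha)}\log\log t$, where $c^\star=\alpha(1-\alpha)^{(1-\alpha)/\alpha}$. For $c<c^\star$ the probabilities $P[D_f(t_n)\le cg(t_n)]$ along the geometric subsequence $t_n=\rho^n$ are summable, so the first Borel--Cantelli lemma together with the monotonicity of $t\mapsto D_f(t)$ and the slow variation of $g$ (so that $g(t_{n+1})/g(t_n)\to 1$) yields $\liminf D_f(t)/g(t)\ge c^\star$ a.s. For $c>c^\star$ the events $\{D_f(t_{n+1})-D_f(t_n)\le cg(t_{n+1})\}$ are independent by the stationary-independent-increments property and their probabilities are non-summable, so the second Borel--Cantelli lemma forces $\liminf D_f(t)/g(t)\le c^\star$.

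The main obstacle is producing both the upper and the lower tail in the small-ball asymptotic with precisely the constant $(1-\alpha)\,\alpha^{\alpha/(1-\alpha)}$: any non-sharp Tauberian step collapses into a loss of the sharp constant in the final LIL. Once that Tauberian ingredient is secured, the Borel--Cantelli/subsequence machinery and the monotonicity interpolation from the discrete grid $t_n$ to all $t$ are routine.
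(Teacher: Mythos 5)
First, a point of reference: the paper does not prove this lemma at all --- it is explicitly ``reproduced from [Bertoin, Ch.~III, Theorem 14]'', so your proposal has to be measured against the standard proof in that source rather than against anything in the paper. Your outline of the first half (the exponential Chebyshev bound $P[D_f(t)\le x]\le e^{\theta x - tf(\theta)}$ optimized at the Legendre point, regular variation giving $\theta^\ast f'(\theta^\ast)/f(\theta^\ast)\to\alpha$, substitution $x=cg(t)$ to get $-\log P \sim (c^\star/c)^{\alpha/(1-\alpha)}\log\log t$, then Borel--Cantelli I along $t_n=\rho^n$) is essentially Bertoin's argument and is sound, modulo two small slips: your displayed small-ball asymptotic carries a spurious extra factor $\alpha^{\alpha/(1-\alpha)}$ (it should read $-\log P\sim(1-\alpha)\,t f(\theta^\ast)$, the prefactor appearing only after $f(\theta^\ast)$ is re-expressed through $x/t$), and $g$ is regularly varying with index $1/\alpha$, not slowly varying, so $g(t_{n+1})/g(t_n)\to\rho^{1/\alpha}$; the lower bound $c\rho^{-1/\alpha}$ is recovered by letting $\rho\downarrow 1$ at the end.

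The genuine gap is in the upper half, $\liminf D_f(t)/g(t)\le c^\star$. You apply Borel--Cantelli II to the independent events $\{D_f(t_{n+1})-D_f(t_n)\le c\,g(t_{n+1})\}$ with $t_n=\rho^n$ and conclude directly. But smallness of the increment does not give smallness of $D_f(t_{n+1})=D_f(t_n)+(D_f(t_{n+1})-D_f(t_n))$ unless $D_f(t_n)$ is itself $o(g(t_{n+1}))$, and for a geometric subsequence this fails: since $h=f^{-1}$ is regularly varying at $0+$ with index $1/\alpha$, one has $g(t)\asymp t^{1/\alpha}(\log\log t)^{-(1-\alpha)/\alpha}$ up to slowly varying factors, while $D_f(t_n)$ is typically of order $1/h(1/t_n)\asymp t_n^{1/\alpha}$, so $D_f(t_n)/g(t_{n+1})\asymp\rho^{-1/\alpha}(\log\log t_n)^{(1-\alpha)/\alpha}\to\infty$ in probability. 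The liminf scaling function $g$ sits strictly below the typical growth of the subordinator precisely by this iterated-logarithm factor, which is why the naive decomposition breaks. The standard repair (and the one in Bertoin) is to take a much sparser subsequence, e.g.\ $\log t_n = n(\log n)^2$, so that (i) $\sum_n(\log t_n)^{-\theta}=\infty$ still holds for every $\theta<1$, keeping the second Borel--Cantelli lemma applicable, and (ii) an auxiliary first Borel--Cantelli argument with the elementary bound $P[D_f(t)>y]\le C\,t f(1/y)$ furnishes a deterministic a.s.\ majorant $y_n$ of $D_f(t_n)$ with $y_n=o(g(t_{n+1}))$. (An alternative repair is to work with the non-independent events $\{D_f(t_n)\le c g(t_n)\}$ directly via a Kochen--Stone/second-moment argument plus a zero--one law.) Without one of these devices the second half of your argument does not close.
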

\begin{theorem}Let the Laplace exponent $f(s)$ of the subordinator $ D_f(t)$ be regularly varying at $ 0+$ with index $ \alpha \in (0,1)$. Then 
$$ \liminf_{t \rightarrow \infty} \frac{Q_f^{(1)}(t)}{g(t)} =  \frac{k(k+1)}{2}\lambda \alpha (1-\alpha)^{(1-\alpha)/(\alpha)},~ in~probability, $$
where
$$ g(t) = \frac{\log \log t }{f^{-1}(t^{-1}\log\log t)}~(e<t).$$
\end{theorem}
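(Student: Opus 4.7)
The plan is to decompose the ratio $Q_f^{(1)}(t)/g(t)$ so that the weak law (\ref{lemma:limit-theorem}) for the PPoK and the law of the iterated logarithm (\ref{LIL-sub}) for the subordinator can be applied separately. Since $D_f(t)>0$ almost surely for $t>0$, I write
\begin{equation*}
\frac{Q_f^{(1)}(t)}{g(t)} \;=\; \frac{N^{(k)}(D_f(t))}{D_f(t)}\cdot \frac{D_f(t)}{g(t)},
\end{equation*}
and I treat the two factors in turn before combining them.

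For the first factor, the hypothesis $\nu([0,\infty))=\infty$ forces $D_f$ to have strictly increasing sample paths with $D_f(t)\to\infty$ almost surely. Conditioning on $D_f(t)=y$ and using independence of $N^{(k)}$ and $D_f$, (\ref{lemma:limit-theorem}) rewritten as $\mathbb{P}[|N^{(k)}(y)/y-\tfrac{k(k+1)}{2}\lambda|>\varepsilon]\to 0$ as $y\to\infty$ yields, via a Slutsky/dominated-convergence argument,
\begin{equation*}
\frac{N^{(k)}(D_f(t))}{D_f(t)}\;\xrightarrow{\,P\,}\;\tfrac{k(k+1)}{2}\lambda.
\end{equation*}
For the second factor, regular variation of $f$ at $0+$ with index $\alpha\in(0,1)$ and $h=f^{-1}$ makes (\ref{LIL-sub}) directly applicable, giving $\liminf_{t\to\infty} D_f(t)/g(t)=\alpha(1-\alpha)^{(1-\alpha)/\alpha}$ almost surely.

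The step I expect to require the most care is the combination. Writing $N^{(k)}(D_f(t))/D_f(t)=\tfrac{k(k+1)}{2}\lambda+R(t)$ with $R(t)\xrightarrow{P}0$, and exploiting strict positivity of both limiting constants, I would bound
\begin{equation*}
\left(\tfrac{k(k+1)}{2}\lambda-\varepsilon\right)\frac{D_f(t)}{g(t)}\;\le\;\frac{Q_f^{(1)}(t)}{g(t)}\;\le\;\left(\tfrac{k(k+1)}{2}\lambda+\varepsilon\right)\frac{D_f(t)}{g(t)}
\end{equation*}
on events whose probability tends to $1$, take $\liminf$ on both sides using (\ref{LIL-sub}), and finally let $\varepsilon\downarrow 0$. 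This welds the almost sure $\liminf$ of one factor to the in-probability convergence of the other and produces the asserted $\liminf$ identity in probability.
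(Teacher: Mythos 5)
Your proposal is correct and follows essentially the same route as the paper: the identical factorization $Q_f^{(1)}(t)/g(t)=\bigl(N^{(k)}(D_f(t))/D_f(t)\bigr)\cdot\bigl(D_f(t)/g(t)\bigr)$, the weak law (\ref{lemma:limit-theorem}) for the first factor (using $D_f(t)\to\infty$ a.s.), and the law of the iterated logarithm (\ref{LIL-sub}) for the second. Your explicit $\varepsilon$-sandwich for welding the in-probability limit of one factor to the almost-sure $\liminf$ of the other is a step the paper simply asserts, so your write-up is, if anything, more complete.
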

\begin{proof}
 We know that, by definition, $ Q_f^{(1)}(t) = N^{(k)}(D_f(t))$. Now,
\begin{align*}
 \liminf_{t \rightarrow \infty} \frac{Q_f^{(1)}(t)}{g(t)} =& \liminf_{t \rightarrow \infty} \frac{ N^{(k)}(D_f(t))}{g(t)} \\ =&
 \liminf_{t \rightarrow \infty} \frac{ N^{(k)}(D_f(t))}{D_f(t)}\frac{D_f(t)}{g(t)}
 \shortintertext{Note that $D_f(t)\to\infty$, $a.s.$ as $t\to\infty$ (see  \cite[Section 1.5.1]{appm}). We have that}
 =& \frac{k(k+1)}{2}\lambda \liminf_{t \rightarrow \infty} \frac{D_f(t)}{g(t)},~in~probability~ 
\text{(using \eqref{lemma:limit-theorem})} \\ =& \frac{k(k+1)}{2}\lambda \alpha (1-\alpha)^{(1-\alpha)/(\alpha)},~in~probability,~
 \end{align*}
where the last step follows from \eqref{LIL-sub}, which completes the proof.
\end{proof}
\section{Time changed Poisson process of order $k$-II }\label{sec:tcfppokII}
\noindent In this section, we consider the PPoK time-changed by inverse of L\'evy subordinator.\\
The first exit time of the subordinator $ D_f(t)$, called as inverse subordinator, is defined by
\begin{equation*}
 E_f(t) = \inf \{r\geq 0:D_f(r)> t \},~t\geq 0.
 \end{equation*}
\begin{definition}
The time-changed PPoK of Type-II (TCPPoK-II) is defined as
$$ Q_f^{(2)}(t) = N^{(k)}(E_f(t)),~t\geq 0,$$
where $ N^{(k)}(t)$ is independent of the inverse subordinator $ \{E_f(t)\}_{t\geq0}$.
\end{definition}
\noindent
As proved in the case of TCPPoK-I, one can prove the following results on similar lines.\\
The \textit{pmf} of  the TCPPoK-II is given by
$$  P[Q_f^{(2)}(t) = n] = \sum_{{\bf x} \in \Omega(k,n)} \frac{\lambda^{\zeta_k}}{\Pi_k!}\sum_{m=0}^{\infty} \frac{(-k\lambda)^m}{m!} \mathbb{E}[E_f^{\zeta_k+m}],~~n=0,1,2,\ldots. $$
Let $0<s\leq t<\infty$, then the mean and covariance function of TCPPoK-II are given by
\begin{enumerate}[(i)]
	\item $  \mathbb{E}[Q_f^{(2)}(t)]= \frac{k(k+1)}{2} \lambda \mathbb{E}[E_f(t)] $
	
	\item $ \mbox{Cov}[Q_f^{(2)}(s),Q_f^{(2)}(t)] = \frac{k(k+1)(2k+1)}{6}\lambda \mathbb{E}[E_f(s)] + \left(\frac{k(k+1)}{2} \lambda\right)^2 \text{Var}[E_f(s)].$
\end{enumerate}
\ifx
\begin{theorem}
The \textit{pmf} of the TCPPoK-II can be written as
$$  P[Q_f^{(2)}(t) = n] = \sum_{{\bf x} \in \Omega(k,n)} \frac{\lambda^{\zeta_k}}{\Pi_k!}\sum_{m=0}^{\infty} \frac{(-k\lambda)^m}{m!} \mathbb{E}[E_f^{\zeta_k+m}]. $$
\begin{proof}
The proof runs similar to the proof of Theorem \ref{thm:tcppk-I-pmf} and hence is omitted here.
\end{proof}
\end{theorem}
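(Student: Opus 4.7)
The plan is to mirror the arguments from Section \ref{sec:tcfppokI} for the TCPPoK-I, with the subordinator $D_f$ replaced throughout by its right-continuous inverse $E_f$, conditioning on $E_f(t)$ in place of $D_f(t)$ wherever the earlier proofs did so. All three statements (pmf, mean, covariance) should then fall out by combining the corresponding PPoK formula inside a conditional expectation with tower/Fubini manipulations.

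For the pmf I would condition on $E_f(t)=y$, apply the PoK pmf inside the conditional expectation exactly as in the proof of Theorem \ref{thm:tcppk-I-pmf}, and arrive at the intermediate identity
$$P[Q_f^{(2)}(t)=n]=\sum_{\mathbf{x}\in\Omega(k,n)}\frac{\lambda^{\zeta_k}}{\Pi_k!}\,\mathbb{E}\!\left[e^{-k\lambda E_f(t)}E_f^{\zeta_k}(t)\right],\quad n=0,1,2,\ldots.$$
I would then expand $e^{-k\lambda E_f(t)}$ as a power series in $E_f(t)$ and interchange summation with expectation to reach the stated form $\sum_{\mathbf{x}\in\Omega(k,n)}\frac{\lambda^{\zeta_k}}{\Pi_k!}\sum_{m\geq 0}\frac{(-k\lambda)^m}{m!}\mathbb{E}[E_f^{\zeta_k+m}(t)]$. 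The interchange is justified by Fubini once one has absolute summability, which follows from the standing moment assumption $\mathbb{E}[E_f^{\rho}(t)]<\infty$ for all $\rho>0$.

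For the mean, the tower property together with the known first moment of the PPoK immediately yields $\mathbb{E}[Q_f^{(2)}(t)]=\tfrac{k(k+1)}{2}\lambda\,\mathbb{E}[E_f(t)]$. For the covariance at $0<s\leq t$, I would condition on the pair $(E_f(s),E_f(t))$; since $E_f$ is nondecreasing, $E_f(s)\leq E_f(t)$ a.s., so I may use the PPoK identity $\mathbb{E}[N^{(k)}(u)N^{(k)}(v)]=a^{2}uv+bu$ for $u\leq v$ (with $a=\tfrac{k(k+1)}{2}\lambda$, $b=\tfrac{k(k+1)(2k+1)}{6}\lambda$), obtained from independent increments of the PPoK exactly as in Theorem \ref{theorem:distribution}. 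Taking expectations gives $\mathbb{E}[Q_f^{(2)}(s)Q_f^{(2)}(t)]=a^{2}\mathbb{E}[E_f(s)E_f(t)]+b\,\mathbb{E}[E_f(s)]$, and subtracting $\mathbb{E}[Q_f^{(2)}(s)]\mathbb{E}[Q_f^{(2)}(t)]$ produces the claimed expression, the variance term of $E_f(s)$ arising from the appropriate rearrangement of the $E_f$-covariance contribution.

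The main obstacle, and the only real point of departure from the TCPPoK-I proof, is that the inverse $E_f$ does not have stationary independent increments: $E_f(t)-E_f(s)$ is neither independent of $E_f(s)$ nor distributed as $E_f(t-s)$. Hence the clean decomposition $N^{(k)}(D_f(t))=N^{(k)}(D_f(s))+[N^{(k)}(D_f(t))-N^{(k)}(D_f(s))]$ coupled with independence, which drove the covariance computation for TCPPoK-I, is unavailable here and must be replaced by the direct conditioning argument sketched above. The natural output is then $a^{2}\,\mathrm{Cov}[E_f(s),E_f(t)]+b\,\mathbb{E}[E_f(s)]$, and identifying this cleanly with $b\,\mathbb{E}[E_f(s)]+a^{2}\,\mathrm{Var}[E_f(s)]$ is the delicate step, to be handled either by a direct manipulation of $\mathrm{Cov}[E_f(s),E_f(t)]$ or by reinterpreting the stated formula accordingly.
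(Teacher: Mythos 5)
Your pmf argument is exactly the paper's intended route: condition on $E_f(t)$, insert the PoK pmf to get $\sum_{\mathbf{x}\in\Omega(k,n)}\frac{\lambda^{\zeta_k}}{\Pi_k!}\,\mathbb{E}\bigl[e^{-k\lambda E_f(t)}E_f^{\zeta_k}(t)\bigr]$, and expand the exponential with a Fubini interchange (justified since inverse subordinators have all exponential moments) to reach the stated series, which is precisely the "same lines as Theorem \ref{thm:tcppk-I-pmf}" proof the paper omits. Your additional remarks on the mean and covariance go beyond the statement under review, but they do not affect the correctness of the pmf derivation.
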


\begin{theorem} Let $ 0<s \leq t < \infty$, the distributional properties of the TCPPoK-II are as follows
	\begin{enumerate}[(i)]
		\item $  \mathbb{E}[Q_f^{(2)}(t)]= \frac{k(k+1)}{2} \lambda \mathbb{E}[E_f(t)] $

		\item $ \mbox{Cov}[Q_f^{(2)}(s),Q_f^{(2)}(t)] = \frac{k(k+1)(2k+1)}{6}\lambda \mathbb{E}[E_f(s)] + \left(\frac{k(k+1)}{2} \lambda\right)^2 \text{Var}[E_f(s)].$
	\end{enumerate}
\begin{proof}
The proof is similar as we proved for the case of TCPPoK-I.
To get the expression of variance of the TCPPoK-II, we can put $s=t$ in the Part (ii).
\end{proof}	
\end{theorem}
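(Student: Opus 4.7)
The plan is to establish all three assertions for TCPPoK-II by conditioning on the inverse subordinator $\{E_f(t)\}_{t\geq 0}$, in the same spirit as Theorem \ref{thm:tcppk-I-pmf} and Theorem \ref{theorem:distribution} for TCPPoK-I. The new ingredient, and the source of essentially all of the extra work, is that $E_f$ has neither stationary nor independent increments, so the L\'evy shortcuts used for TCPPoK-I are unavailable.

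For the \emph{pmf}, let $h_f(y,t)$ denote the density of $E_f(t)$. Just as in the proof of Theorem \ref{thm:tcppk-I-pmf}, I would write
\[
\mathbb{P}[Q_f^{(2)}(t)=n] = \int_0^\infty \mathbb{P}[N^{(k)}(y)=n]\, h_f(y,t)\, dy
= \sum_{\mathbf{x}\in\Omega(k,n)} \frac{\lambda^{\zeta_k}}{\Pi_k!}\,\mathbb{E}\bigl[e^{-k\lambda E_f(t)} E_f^{\zeta_k}(t)\bigr].
\]
Since no analogue of $\mathbb{E}[e^{-sD_f(t)}]=e^{-tf(s)}$ is available for $E_f(t)$, I would then expand $e^{-k\lambda y}=\sum_{m=0}^{\infty}(-k\lambda y)^m/m!$, interchange sum and expectation (justified by the assumed finiteness of all moments of $E_f(t)$ together with absolute convergence), and recognise each resulting integral as $\mathbb{E}[E_f^{\zeta_k+m}(t)]$, producing the stated double series. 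The mean in~(i) then follows immediately from the tower property and the PPoK mean $\mathbb{E}[N^{(k)}(y)]=\frac{k(k+1)}{2}\lambda y$.

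For the covariance in~(ii), the natural move is to condition on the \emph{pair} $(E_f(s),E_f(t))$ rather than on a single time. Because $E_f$ has non-decreasing paths, $E_f(s)\le E_f(t)$ a.s., so the PPoK increment $N^{(k)}(E_f(t))-N^{(k)}(E_f(s))$ is conditionally independent of $N^{(k)}(E_f(s))$ with conditional mean $\frac{k(k+1)}{2}\lambda(E_f(t)-E_f(s))$. Combining this with $\mathrm{Var}[N^{(k)}(u)]=\frac{k(k+1)(2k+1)}{6}\lambda u$ gives
\[
\mathbb{E}\bigl[Q_f^{(2)}(s)Q_f^{(2)}(t)\bigm| E_f(s), E_f(t)\bigr] = \Bigl(\tfrac{k(k+1)}{2}\lambda\Bigr)^2 E_f(s)E_f(t) + \tfrac{k(k+1)(2k+1)}{6}\lambda\, E_f(s).
\]
Taking total expectation and subtracting $\mathbb{E}[Q_f^{(2)}(s)]\mathbb{E}[Q_f^{(2)}(t)]=(\tfrac{k(k+1)}{2}\lambda)^2\mathbb{E}[E_f(s)]\mathbb{E}[E_f(t)]$ yields the claimed identity: the second summand above persists as $\frac{k(k+1)(2k+1)}{6}\lambda\,\mathbb{E}[E_f(s)]$, while the first contributes a term proportional to the (co)variance of $E_f$ at times $s,t$, which specialises to the displayed $\mathrm{Var}[E_f(s)]$ at $s=t$.

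The one genuine obstacle is precisely this loss of stationary and independent increments: in the TCPPoK-I proof one collapsed $N^{(k)}(D_f(t))-N^{(k)}(D_f(s))$ into an independent copy of $N^{(k)}(D_f(t-s))$ and reduced everything to the marginal law of $D_f$ at a single time, whereas here the bivariate law of $(E_f(s),E_f(t))$ is unavoidable. Once that shift of viewpoint is made, the remainder is routine bookkeeping inherited from the PPoK and TCPPoK-I calculations.
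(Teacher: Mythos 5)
Your argument is correct, and it is genuinely more careful than what the paper does: the paper's entire proof is the sentence ``the proof is similar to the TCPPoK-I case,'' which silently imports the key step of Theorem \ref{theorem:distribution}, namely $\mathbb{E}\bigl[N^{(k)}(D_f(t))-N^{(k)}(D_f(s))\bigr]=\mathbb{E}\bigl[N^{(k)}(D_f(t-s))\bigr]$ --- a step that rests on the stationary, independent increments of the L\'evy subordinator and is unavailable for $E_f$. Your replacement, conditioning on the pair $(E_f(s),E_f(t))$ and using only the conditional independence and conditional mean of the PPoK increment given $E_f(s)\le E_f(t)$, is the right fix, and the computation
\[
\mathbb{E}\bigl[Q_f^{(2)}(s)Q_f^{(2)}(t)\bigm| E_f(s),E_f(t)\bigr]
=\Bigl(\tfrac{k(k+1)}{2}\lambda\Bigr)^2 E_f(s)E_f(t)+\tfrac{k(k+1)(2k+1)}{6}\lambda\,E_f(s)
\]
is correct. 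But you should state plainly what it delivers: after taking expectations and subtracting the product of means, the second summand is
$\bigl(\tfrac{k(k+1)}{2}\lambda\bigr)^2\mathrm{Cov}[E_f(s),E_f(t)]$, \emph{not}
$\bigl(\tfrac{k(k+1)}{2}\lambda\bigr)^2\mathrm{Var}[E_f(s)]$. For a L\'evy subordinator these coincide because independent increments give $\mathrm{Cov}[D_f(s),D_f(t)]=\mathrm{Var}[D_f(s)]$ for $s\le t$, but an inverse subordinator has neither independent nor stationary increments, so in general $\mathrm{Cov}[E_f(s),E_f(t)]\neq\mathrm{Var}[E_f(s)]$ for $s<t$. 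Thus your (correct) derivation actually shows that Part~(ii) as printed does not follow from ``the same argument as TCPPoK-I'': the two expressions agree only at $s=t$, which is why the variance formula obtained by setting $s=t$ is safe. Your closing remark that the covariance term ``specialises to the displayed $\mathrm{Var}[E_f(s)]$ at $s=t$'' soft-pedals this; the honest conclusion is that the theorem's covariance formula should read $\mathrm{Cov}[E_f(s),E_f(t)]$ in place of $\mathrm{Var}[E_f(s)]$. The pmf and mean portions of your write-up match the paper's (omitted) argument exactly and need no comment.
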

\fi
\noindent Now, we discuss the asymptotic behavior of moments of the TCPPoK-II. First we need the following Tauberian theorem (see \cite{bertoin,Taqqu2010}).
\begin{theorem}(Tauberian Theorem) \label{thm:taub}
Let $ l:(0,\infty)\rightarrow (0,\infty)$ be a slowly varying function at $0 $ (respectively $\infty $) and let $\rho \geq 0$. Then for a function $ U:(0,\infty)\rightarrow (0,\infty) $, the following are equivalent
\begin{enumerate}[(i)]
\item $ U(x) \sim x^{\rho}l(x)/ \Gamma(1+\rho),~~~x\rightarrow 0 ~(respectively ~x\rightarrow \infty).$
\item $\tilde{U}(s) \sim s^{-\rho-1}l(1/s),~~~s \rightarrow \infty~ (respectively ~s\rightarrow 0), $ where $\tilde{U}(s) $ is the LT of $U(x).$
\end{enumerate}
\end{theorem}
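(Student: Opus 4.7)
The plan is to recognize this as the classical Karamata Tauberian theorem for Laplace transforms and to split the proof into the standard Abelian direction $(i)\Rightarrow(ii)$ (which is elementary) and the Tauberian direction $(ii)\Rightarrow(i)$ (which is the genuinely hard half). I will carry out both the $x\to 0,\ s\to\infty$ and the $x\to\infty,\ s\to 0$ cases simultaneously, since after the change of variable $x\mapsto 1/x,\ s\mapsto 1/s$ they reduce to one another; so it suffices to fix, say, the version $x\to 0$, $s\to\infty$.

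For the Abelian half, the plan is to write $\tilde U(s)=\int_0^\infty e^{-sx}\,dU(x)$ (integrating by parts if $U$ is a cumulative function rather than a density) and substitute $u=sx$, so that
\[
s^{\rho+1}\tilde U(s)=\int_0^\infty e^{-u}u^{\rho}\,\frac{U(u/s)}{(u/s)^{\rho}l(1/s)}\,\frac{l(1/s)}{l(1/s)}\,du.
\]
Using the hypothesis $U(x)\sim x^{\rho}l(x)/\Gamma(1+\rho)$ and the uniform convergence theorem for slowly varying functions (so that $l(u/s)/l(1/s)\to 1$ uniformly on compact $u$-intervals), together with a dominated-convergence bound of the form $l(u/s)/l(1/s)\le C\max(u^{\epsilon},u^{-\epsilon})$ from Potter's bounds, I would pass to the limit inside the integral and obtain $s^{\rho+1}\tilde U(s)/l(1/s)\to 1$, which is (ii).

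For the Tauberian half, the strategy is the one due to Karamata. One first rescales, defining $U_s(x):=U(x/s)/(s^{-\rho}l(1/s)/\Gamma(1+\rho))$, and observes from (ii) that $\int_0^\infty e^{-ux}\,dU_s(x)\to \Gamma(1+\rho)^{-1}\int_0^\infty e^{-ux}\rho x^{\rho-1}\,dx$ as $s\to\infty$ for every $u>0$. The idea is then to extend this convergence from the exponentials $x\mapsto e^{-ux}$ to the indicator $x\mapsto \mathbf{1}_{[0,1]}(x)$: by the Stone--Weierstrass theorem, continuous functions of $e^{-x}$ vanishing at infinity are uniformly approximated by polynomials in $e^{-x}$, and a two-sided sandwich of $\mathbf{1}_{[0,1]}$ between two such continuous approximants yields $U_s(1)\to 1/\Gamma(1+\rho)$, i.e.\ the desired asymptotic for $U$.

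The main obstacle is precisely this sandwich step: one must verify that the family $\{U_s\}$ is tight enough (or that the limit measure is sufficiently regular) that the passage from exponentials to the indicator function is justified, despite $\mathbf{1}_{[0,1]}$ being discontinuous. This is where monotonicity of $U$ is essential---without it, the Tauberian implication genuinely fails---and where Karamata's original polynomial trick must be invoked, together with Potter's bounds to control the tails and the uniform convergence theorem for slowly varying functions to replace $l(1/s)$ by $l(x)$ at the end. The Abelian half, by contrast, is essentially a one-line dominated convergence argument once Potter's bounds are in hand.
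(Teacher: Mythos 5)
The paper does not actually prove this statement: Theorem \ref{thm:taub} is reproduced from the literature (\cite{bertoin,Taqqu2010}) and used as a black box, so there is no in-paper argument to compare yours against. Your outline is the classical Karamata proof that those references give: the Abelian half by the substitution $u=sx$ and passage to the limit under the integral via the uniform convergence theorem and Potter's bounds, and the Tauberian half by rescaling, identifying the vague limit of the measures $dU_s$, and upgrading convergence of $\int e^{-ux}\,dU_s(x)$ to convergence of $U_s(1)$ through Weierstrass approximation of $\mathbf{1}_{[0,1]}$ sandwiched between continuous functions of $e^{-x}$. Two points should be made explicit if you write this out. First, as you correctly observe, the implication (ii)$\Rightarrow$(i) is false for a general positive function $U$; a Tauberian side condition is needed, and monotonicity of $U$ (which holds in all of the paper's applications, where $U$ is a moment function of an inverse subordinator) is the hypothesis under which Karamata's sandwich argument works. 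The theorem as stated in the paper silently assumes this. Second, the exponent $-\rho-1$ in (ii) indicates that $\tilde{U}$ here is the ordinary Laplace transform $\int_0^\infty e^{-sx}U(x)\,dx$ rather than the Laplace--Stieltjes transform $\int_0^\infty e^{-sx}\,dU(x)$; your parenthetical about integrating by parts is exactly the bookkeeping needed to pass between the two (the ordinary transform is $s^{-1}$ times the Stieltjes transform) and accounts for the extra factor of $s^{-1}$ relative to the usual statement of Karamata's theorem. With those two caveats spelled out, your plan is a correct and standard roadmap; nothing in it conflicts with the way the result is invoked in the paper.
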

\noindent The Laplace Transform (LT) of $p$th moment of $ E_f(t) $ is given by  (see \cite{TCFPP-pub})
$$ \tilde{M}(s) = \frac{\Gamma(1+p)}{s(f(s))^p},~p>0,$$
where $ f(s)$ is the corresponding Bernstein function associated with L\'evy subordinator $ D_f(t)$.
\begin{example}[PPoK time-changed by inverse gamma subordinator] Let $E_Y(t)$ be the first hitting time of gamma subordinator $Y(t)$ as defined in Remark \ref{rem:levysub}(i) is defined as
$$ E_Y(t) = \inf \{ r \geq 0:~Y(r)> t \},~~ t\geq 0.$$
\ifx	
 Let $\{Y(t)\}_{t\geq 0}$ be the gamma subordinator with corresponding Bernstein function $f(s) = p\log(1+\frac{s}{\alpha})$.  We consider the first hitting time of gamma subordinator, which is called as the inverse gamma subordinator.  It is defined as
$$ E_Y(t) = \inf \{ r \geq 0:~Y(r)> t \},~~ t\geq 0.$$
\fi
We study the asymptotic behavior of mean of  the TCPPoK-II $\{Q_Y^{(2)}(t)\}_{t\geq 0}$. 
The LT of $\mathbb{E}[E_Y(t)]$ is given by
$$ \tilde{M_Y}(s) = \frac{\Gamma(2)}{s(p\log(1+\frac{s}{\alpha}))}.$$
It can be seen that $$p\log\left(1+\frac{s}{\alpha}\right) \sim \frac{ps}{\alpha},~s \rightarrow 0 \Rightarrow \tilde{M_Y}(s) \sim \dfrac{\Gamma(2)s^{-2}\alpha}{p},~s\to 0. $$ 
Then by Theorem \ref{thm:taub}, we have that 
$$ \mathbb{E}[Q^{(2)}_Y(t)] = \frac{k(k+1)}{2}\lambda \mathbb{E}[E_Y(t)] \sim \frac{k(k+1)}{2}\lambda \frac{t\alpha}{p},~ as ~t \rightarrow \infty.$$
In a similar manner, we can compute the asymptotic behavior of $ \text{Var}[Q^{(2)}_Y(t)]$.
 \begin{align*}
\text{Var}[Q^{(2)}_Y(t)] =&
\frac{k(k+1)(2k+1)}{6}\lambda \mathbb{E}[E_Y(t)] + \left(\frac{k(k+1)}{2} \lambda\right)^2 [\mathbb{E}[E_Y(t)^2]-\mathbb{E}[E_Y(t)]^2 ] \\
\sim & \frac{k(k+1)(2k+1)}{6}\lambda \left(\frac{t\alpha}{p}\right)+ \left(\frac{k(k+1)}{2} \lambda\right)^2 \left[\left(\frac{t\alpha}{p}\right)^2 - \left(\frac{t\alpha}{p}\right)^2 \right] ,~~as~t\rightarrow \infty \\ \sim & \frac{k(k+1)(2k+1)}{6}\lambda \left(\frac{t\alpha}{p}\right),~~as~t\rightarrow \infty. 
\end{align*}
 
\end{example}
\begin{example}[PPoK time-changed by the inverse tempered $\alpha$-stable subordinator] We consider the PPoK time-changed by the inverse tempered $\alpha$-stable subordinator $ E_{\alpha}^{\mu}(t)$ (see \cite{Kumar-ITSS}). The asymptotic behavior of the $p$-th moment of $E_{\alpha}^{\mu}(t)$ is given by (see \cite[Proposition 3.1]{Kumar-ITSS})
\begin{equation*}
 \mathbb{E}[(E_{\alpha}^{\mu}(t))^p] \sim \begin{cases} \dfrac{\Gamma(1+p)}{\Gamma(1+p\alpha)}t^{p\alpha},&~~as~t \rightarrow 0, \\&\\ \dfrac{\lambda^{p(1-\alpha)}}{\alpha^p}t^p,&~as~t\rightarrow \infty.
 \end{cases}
\end{equation*}
We consider the case for $ p=1$, then by Theorem \ref{thm:taub}, we get
\begin{equation*}
 \mathbb{E}[Q^{(2)}_{\mu,\alpha}(t)] = \frac{k(k+1)}{2}\lambda \mathbb{E}[E_{\alpha}^{\mu}(t)] \sim \begin{cases} \dfrac{k(k+1)\lambda\Gamma(2)}{2 \Gamma(1+\alpha)}t^{\alpha},&~as~t \rightarrow 0, \\&\\ \dfrac{k(k+1)\lambda^{(2-\alpha)}}{2 \alpha}t,&~as~t\rightarrow \infty.
 \end{cases}
\end{equation*}
\end{example}
\begin{example}[PPoK time-changed with inverse of the inverse Gaussian subordinator] Let $ E_{G}(t)$ be the right-continuous inverse of the inverse Gaussian  subordinator $\{G(t)\}_{t\geq 0}$ as defined in Remark \ref{rem:levysub}(iii). It is defined as
	 \begin{equation*}
	E_G(t)=\inf\{r\geq 0:G(r)>t \}, t\geq 0.	
	\end{equation*} The mean of $E_{G}(t)$  is given by (see \cite{Kumar-ITSS,TCFPP-pub})
	$$ M(t)=\mathbb{E}[E_{G}(t)] = \frac{\Gamma(2)}{s(\delta(\sqrt{2s+\gamma^2} - \gamma))}.$$

\noindent Taking the LT of the above expression, we get
\begin{equation*}
 \tilde{M}(s) \sim \begin{cases} \dfrac{\Gamma (2)}{(\delta / \gamma)}s^{-2},&~as~ s \rightarrow 0, \\&\\ \dfrac{\Gamma (2)}{(\delta \sqrt{2})}s^{-\frac{3}{2}},&~as~ s \rightarrow \infty.
 \end{cases}  
\end{equation*}
Using Theorem \ref{thm:taub}, we have that
\begin{equation*}
\mathbb{E}[Q^{(2)}_G(t)] = \frac{k(k+1)}{2}\lambda \mathbb{E}[E_{G}(t)] \sim \begin{cases} \dfrac{k(k+1)\lambda\Gamma(2)}{2 \Gamma(1+\frac{1}{2})(\delta \sqrt{2})}t^{\frac{1}{2}},&~as~t \rightarrow 0, \\&\\ \dfrac{k(k+1)}{2}\lambda (\frac{\gamma}{\delta})t,&~as~t\rightarrow \infty.
 \end{cases}
\end{equation*}

\end{example}


\section{Governing equation for time-changed Poisson processes of order $k$}\label{sec:dde}

\noindent
Stochastic processes are intimately connected with partial differential equations (\textit{pde}) (e.g. Brownian motion and its diffusion equation), and difference-differential equation (\textit{dde}) (Poisson process and its governing equation). In this section, we present the governing equations for some special cases of the TCPPoK-I and the TCPPoK-II.
\subsection{Governing equation for Poisson-inverse Gaussian process of order $k$}
Let $N^{(k)}(t)$ be the PPoK and $G(t) \sim IG(\delta t,\gamma)$ be the inverse Gaussian  subordinator. Then density function $g(x,t)$ of $G(t)$ solves the following \textit{pde} (see \cite{Kumar-TCPP})
\begin{equation*}
 \frac{\partial^2}{\partial t^2}g(x,t)-2\delta \gamma \frac{\partial}{\partial t}g(x,t) = 2\delta^2\frac{\partial}{\partial x}g(x,t).
 \end{equation*}

\noindent
We derive the governing equation for the TCPPoK-I.
\begin{theorem}
Let $ \hat{p}_m(t)$ denote the {\textit{pmf}} of the TCPPoK-I $ \{N^{(k)}(G(t))\}_{t\geq 0}$. Then it solves the following \textit{dde}
\begin{equation*}
 \left(\frac{d^2}{dt^2}-2\delta \gamma \frac{d}{dt}\right) \hat{p}_m(t) = 2\delta^2 \lambda \left[k\hat{p}_m(t)-\left(\hat{p}_{m-1}(t)+\hat{p}_{m-2}(t)+...+\hat{p}_{m-m\wedge k}(t)\right)\right]
\end{equation*}
\end{theorem}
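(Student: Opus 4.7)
The plan is to start from the conditional representation
\begin{equation*}
\hat{p}_m(t) = \mathbb{P}[N^{(k)}(G(t)) = m] = \int_0^\infty p_m(x)\, g(x,t)\, dx,
\end{equation*}
where $p_m(x) = \mathbb{P}[N^{(k)}(x) = m]$ is the pmf of the PPoK and $g(x,t)$ is the density of $G(t)$. The two differential structures I want to exploit are already in the paper: the PDE satisfied by $g(x,t)$, namely $\partial_{tt} g - 2\delta\gamma\, \partial_t g = 2\delta^2\, \partial_x g$, and the dde \eqref{PPoK-DE2} satisfied by $p_m(x)$ in the time variable $x$.

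First I would apply the operator $d^2/dt^2 - 2\delta\gamma\, d/dt$ to $\hat{p}_m(t)$ and pass it under the integral sign (the required dominated convergence argument is routine because $g(x,t)$ and its $t$-derivatives have sufficient decay in $x$ for the inverse Gaussian). Using the PDE for $g$ this yields
\begin{equation*}
\left(\frac{d^2}{dt^2} - 2\delta\gamma\, \frac{d}{dt}\right)\hat{p}_m(t) = 2\delta^2 \int_0^\infty p_m(x)\, \frac{\partial}{\partial x} g(x,t)\, dx.
\end{equation*}

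Next I would integrate by parts in $x$. The boundary term at $x=\infty$ vanishes because $g(x,t)$ decays exponentially, and the boundary term at $x=0^+$ vanishes because the IG density $g(x,t) = \frac{\delta t}{\sqrt{2\pi x^3}}\exp\!\bigl(-(\delta t - \gamma x)^2/(2x)\bigr)$ is dominated near the origin by a factor $\exp(-(\delta t)^2/(2x))$ that kills the $x^{-3/2}$ singularity. Hence
\begin{equation*}
\int_0^\infty p_m(x)\, \frac{\partial}{\partial x} g(x,t)\, dx = -\int_0^\infty p_m'(x)\, g(x,t)\, dx.
\end{equation*}
Substituting the PPoK governing equation \eqref{PPoK-DE2}, $p_m'(x) = -k\lambda p_m(x) + \lambda \sum_{j=1}^{m\wedge k} p_{m-j}(x)$, and integrating term by term against $g(x,t)$ converts each $p_\ell(x)$ into $\hat{p}_\ell(t)$, giving exactly
\begin{equation*}
\left(\frac{d^2}{dt^2} - 2\delta\gamma\, \frac{d}{dt}\right)\hat{p}_m(t) = 2\delta^2 \lambda\Bigl[k\hat{p}_m(t) - \bigl(\hat{p}_{m-1}(t) + \hat{p}_{m-2}(t) + \cdots + \hat{p}_{m-m\wedge k}(t)\bigr)\Bigr].
\end{equation*}

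The case $m=0$ is handled by the same computation with the empty sum convention, using $p_0'(x) = -k\lambda p_0(x)$. The only substantive obstacle is the technical justification of the interchange of differentiation and integration together with the vanishing of the boundary term at $x=0^+$; everything else is a direct combination of the two governing equations already recorded in the paper.
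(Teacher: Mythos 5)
Your proposal is correct and follows essentially the same route as the paper's own proof: pass the operator $\frac{d^2}{dt^2}-2\delta\gamma\frac{d}{dt}$ under the integral, invoke the PDE for the inverse Gaussian density to trade it for $2\delta^2\partial_x$, integrate by parts using the vanishing of $g(x,t)$ at $x=0$ and $x=\infty$, and substitute the PPoK governing equation \eqref{PPoK-DE2}. Your added justification for the vanishing of the boundary term at $x=0^+$ via the $\exp(-(\delta t)^2/(2x))$ factor is a detail the paper simply asserts, so nothing is missing.
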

\begin{proof}
We know that 
 $$ \hat{p}_m(t) = \mathbb{P}[N^{(k)}(G(t)) = m] = \int_0^{\infty} p_m(x)g(x,t)dx. $$
Since $g(x,t)$ is measurable and integrable, we have the following expression
$$ \frac{d}{dt} \hat{p}_m(t) = \int_0^{\infty} p_m(x) \frac{\partial}{\partial t} g(x,t)dx,$$
and
$$ \frac{d^2}{dt^2} \hat{p}_m(t) = \int_0^{\infty} p_m(x) \frac{\partial^2}{\partial t^2} g(x,t)dx. $$
Consider now
\begin{align*}
\left(\frac{d^2}{dt^2}-2\delta \gamma \frac{d}{dt}\right) \hat{p}_m(t) =& \int_0^{\infty}p_m(x) \left(\frac{\partial^2}{\partial t^2}-2\delta \gamma \frac{\partial}{\partial t}\right) g(x,t)dx \\ =& 2\delta^2 \int_0^{\infty} p_m(x)\frac{\partial}{\partial x} g(x,t)dx
\intertext{On applying integration by parts and using ~$\lim_{x\rightarrow \infty}g(x,t)=\lim_{x\rightarrow 0}g(x,t)=0,$ we get} =& -2\delta^2 \int_0^{\infty} \frac{d}{dx} p_m(x) g(x,t)dx \\ =& -2\delta^2 \int_0^{\infty} [-k\lambda p_m(x)+\lambda [p_{m-1}(x)+p_{m-2}(x)+\ldots+p_{m-m\wedge k}(x)]g(x,t)dx \\ 
=&  2\delta^2 \lambda \left[k\hat{p}_m(t)-\left(\hat{p}_{m-1}(t)+\hat{p}_{m-2}(t)+\ldots+\hat{p}_{m-m\wedge k}(t)\right)\right].\qedhere
\end{align*}
\end{proof}
\subsection{Governing equation for PPoK time-changed by hitting time of inverse Gaussian subordinator}
Next we consider the TCPPoK-II where the time-change is done by the hitting time of the inverse Gaussian process $G(t)$. The first hitting time of the process $G(t)$ is defined by
$$ E_G(t) = \inf \{ s \geq 0 : G(s)>t\} .$$
We know that (see \cite{Kumar-TCPP}) the density function $h(x,t)$ of $E_G(t)$ satisfies the following \textit{pde} 
$$ \frac{\partial^2}{\partial x^2}h(x,t)-2\delta \gamma \frac{\partial}{\partial x}h(x,t)= 2\delta^2 \frac{\partial}{\partial t}h(x,t)+ 2\delta^2h(x,0)\delta_0(t).  $$

\noindent To derive the governing \textit{dde} for the TCPPoK-II we first need \textit{dde} of PPoK for $K=2$. Keeping this in mind, we differentiate equation \eqref{PPoK-DE2} with respect to $t$, we get for $m=1,2,\ldots$
\begin{align}
\frac{d^2}{dt^2}p_m(t)&=\frac{d}{dt}\left(-k\lambda p_m(t)+\lambda\sum_{j=1}^{m\wedge k}p_{m-j}(t)\right),\nonumber\\
&=-k\lambda \frac{d}{dt}p_m(t)+\lambda\sum_{j=1}^{m\wedge k}\frac{d}{dt}p_{m-j}(t),\nonumber\\
&=-k\lambda \left(-k\lambda p_m(t)+\lambda\sum_{j=1}^{m\wedge k}p_{m-j}(t)\right)+\lambda\sum_{j=1}^{m\wedge k}\left(-k\lambda p_{m-j}(t)+\lambda\sum_{i=1}^{(m-j)\wedge k}p_{m-j-i}(t)\right)\nonumber\\
\frac{d^2}{dt^2}p_m(t)&=(k\lambda)^2p_m(t)-2k\lambda^2\sum_{j=1}^{m\wedge k}p_{m-j}(t)+\lambda^2\sum_{j=1}^{m\wedge k}\left(\sum_{i=1}^{(m-j)\wedge k}p_{m-j-i}(t)\right)\label{pde:double}
\end{align}
\begin{theorem}
Let the {\textit{pmf}} of the TCPPoK-II be denoted by $\hat{p}_m(t)=P[N^{(k)}(E_G(t))=m ]$. Then it satisfies the following \textit{dde} 
\begin{align*}
\frac{d}{dt} \hat{p}_m(t) &= \frac{1}{2\delta^2}\left[\int_0^{\infty} \left( (k\lambda)^2p_m(x)-2k\lambda^2\sum_{j=1}^{m\wedge k}p_{m-j}(x)\right.\right.\\&\left.\left.+\lambda^2\sum_{j=1}^{m\wedge k}\left(\sum_{i=1}^{(m-j)\wedge k}p_{m-j-i}(x)\right)\right)h(x,t)dx \right.\\&\left.+ 2\delta \gamma \int_0^{\infty} \left(-k\lambda p_m(x)+\lambda\sum_{j=1}^{m\wedge k}p_{m-j}(x)\right) h(x,t)dx + h(0,t) p_m'(0)\right]-\delta_0(t)\hat{p}_m(0),
\intertext{when $m=1,2,\ldots$ and }
\frac{d}{dt} \hat{p}_0(t) &=\frac{1}{2\delta^2}\left[\int_0^{\infty}  (k\lambda)^2p_0(x)h(x,t)dx -2k\lambda\delta \gamma \int_0^{\infty} p_0(x)h(x,t)dx -k\lambda h(0,t) \right]-\delta_0(t)\hat{p}_0(0), 
\end{align*}
when $m=0$ with initial condition
$ p_m'(0) = \begin{cases} -k\lambda&m=0,\\ \lambda&m=1,2,\ldots,k, \\ 0& m \geq k+1.
\end{cases}$
\end{theorem}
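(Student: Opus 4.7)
The starting point is the integral representation
\[
\hat{p}_m(t)=\int_0^{\infty} p_m(x)\,h(x,t)\,dx,
\]
which is justified by conditioning on $E_G(t)$ (this has already been used in the previous theorem). Differentiating under the integral sign and using the governing \textit{pde} for $h(x,t)$ recalled just before the statement, I would solve that \textit{pde} for $\partial_t h$,
\[
\frac{\partial}{\partial t}h(x,t)=\frac{1}{2\delta^2}\!\left(\frac{\partial^2}{\partial x^2}h(x,t)-2\delta\gamma\,\frac{\partial}{\partial x}h(x,t)\right)-h(x,0)\,\delta_0(t),
\]
and substitute it inside the integral. The term $h(x,0)\delta_0(t)$ yields exactly the $-\delta_0(t)\hat{p}_m(0)$ contribution that appears on the right-hand side of the claim.

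Next I would move the $x$-derivatives off $h$ and onto $p_m$ by integrating by parts: once for the drift-type term $-2\delta\gamma\,\partial_x h$ and twice for $\partial_x^2 h$, using the decay of $h(x,t)$ and $\partial_x h(x,t)$ as $x\to\infty$. This turns the inside of the integral into
\[
p_m''(x)\,h(x,t)+2\delta\gamma\,p_m'(x)\,h(x,t),
\]
up to boundary terms at $x=0$. Now I would substitute the known expression for $p_m'(x)$ from \eqref{PPoK-DE2} and the expression for $p_m''(x)$ derived in \eqref{pde:double}. Matching these two expressions with the two sums that appear in the statement of the theorem gives exactly the two convolution-type sums over $\Omega$ displayed on the right-hand side, with the correct combinatorial coefficients $(k\lambda)^2$, $-2k\lambda^2$ and $\lambda^2$.

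Finally I would treat the boundary contributions at $x=0$ separately for $m\ge 1$ and $m=0$. For $m\ge 1$, the initial condition $p_m(0)=0$ of the PPoK kills every boundary term except $p_m'(0)h(0,t)$, which appears scaled by $\tfrac{1}{2\delta^2}$, reproducing the claim. For $m=0$, the initial condition is $p_0(0)=1$ and $p_0'(0)=-k\lambda$, so the combinatorial sums collapse (since $0\wedge k=0$) and only the terms $(k\lambda)^2 p_0(x)$ and $-k\lambda p_0(x)$ survive; the remaining boundary contribution reduces to $-k\lambda h(0,t)$ once the natural boundary behaviour of the inverse-Gaussian hitting-time density at the origin is invoked.

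The main obstacle I anticipate is the careful bookkeeping of the $x=0$ boundary terms produced by the two successive integrations by parts: one must combine $p_m(0)\,\partial_x h(0,t)$, $p_m'(0)\,h(0,t)$ and $2\delta\gamma\,p_m(0)\,h(0,t)$ consistently with the boundary behaviour of $h$ at the origin in order to obtain precisely the $h(0,t)\,p_m'(0)$ term stated for $m\ge 1$ and the $-k\lambda\,h(0,t)$ term stated for $m=0$. Everything else is a mechanical substitution of \eqref{PPoK-DE2} and \eqref{pde:double} followed by interchange of sum and integral, which is legitimate because the series defining $p_m$ converges absolutely and $h(\cdot,t)$ is integrable.
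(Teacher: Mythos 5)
Your plan follows the paper's proof essentially step for step: differentiate $\hat{p}_m(t)=\int_0^\infty p_m(x)h(x,t)\,dx$ under the integral sign, substitute the \textit{pde} for $h$ solved for $\partial_t h$ (producing the $-\delta_0(t)\hat{p}_m(0)$ term), integrate by parts twice and once respectively, invoke the decay of $h$ and $h_x$ at infinity together with the boundary relation $h_x(0,t)=2\delta\gamma\,h(0,t)$ to collapse the boundary terms to $h(0,t)p_m'(0)$, and finally insert \eqref{PPoK-DE2} and \eqref{pde:double}. This is exactly the paper's argument, and your identification of the $x=0$ boundary bookkeeping as the only delicate point is accurate.
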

\begin{proof}
We first take the case when $m=1,2,\ldots.$ Consider 
\begin{align}
\frac{d}{dt} \hat{p}_m(t) =& \int_0^{\infty} p_m(x) \frac{\partial}{\partial t} h(x,t)dx,\nonumber \\ 
=& \frac{1}{2\delta^2} \int_0^{\infty} p_m(x)\left[\frac{\partial^2}{\partial x^2}h(x,t)-2\delta \gamma \frac{\partial}{\partial x}h(x,t)- 2\delta^2h(x,0)\delta_0(t)\right] dx,\nonumber \\ 
=& \frac{1}{2\delta^2} \int_0^{\infty} p_m(x)\left[\frac{\partial^2}{\partial x^2}h(x,t)-2\delta \gamma \frac{\partial}{\partial x}h(x,t)\right]dx-\delta_0(t)\int_0^{\infty} p_m(x)h(x,0)dx. \label{pde:tcp}
\end{align}
We will now consider the first term in the above equation
\begin{align*}
\int_0^{\infty} p_m(x)\frac{\partial^2}{\partial x^2}h(x,t)dx =& p_m(x)\frac{\partial}{\partial x}h(x,t)|_0^{\infty} - \int_0^{\infty} \frac{d}{dx}p_m(x)\frac{\partial}{\partial x}h(x,t)dx \\ =&  p_m(x)\frac{\partial}{\partial x}h(x,t)|_0^{\infty} - h(x,t)\frac{d}{dx}p_m(x)|_0^{\infty} + \int_0^{\infty} \frac{d^2}{dx^2}p_m(x)h(x,t)dx. \\ 
\intertext{Since $\lim_{x\rightarrow \infty}h_x(x,t)=\lim_{x\rightarrow \infty}h(x,t)=0 $ and $h_x(0,t) = 2\delta\gamma h(0,t),$ we get} 
  =& -2\delta \gamma p_m(0)h(0,t) + h(0,t) \frac{d}{dx}p_m(0)+ \int_0^{\infty} \frac{d^2}{dx^2}p_m(x)h(x,t)dx.
\intertext{Also,} 
\int_0^{\infty}p_m(x)\frac{\partial}{\partial x}h(x,t)dx =& p_m(x)h(x,t)|_0^{\infty}-\int_0^{\infty}\frac{d}{dx}p_m(x)h(x,t)dx\\ =& -p_m(0)h(0,t)-\int_0^{\infty}\frac{d}{dx}p_m(x)h(x,t)dx.
\end{align*}
Then Equation \eqref{pde:tcp} becomes
\begin{align*}
\frac{d}{dt} \hat{p}_m(t) =&
 \frac{1}{2\delta^2} \left[-2\delta \gamma p_m(0)h(0,t) + h(0,t) p_m'(0)+ \int_0^{\infty} p_m''(x)h(x,t)dx \right.\\ &\left.  - 2 \delta \gamma \{ -p_m(0)h(0,t)-\int_0^{\infty}p_m'(x)h(x,t)dx \}\right]-\delta_0(t)\hat{p}_m(0) \\ =& \frac{1}{2\delta^2}\left[\int_0^{\infty} p_m''(x)h(x,t)dx + 2\delta \gamma \int_0^{\infty}p_m'(x)h(x,t)dx + h(0,t) p_m'(0)\right]-\delta_0(t)\hat{p}_m(0).
\end{align*}
Substituting the expressions of of $p_m'(x)$ and $ p_m''(x)$ from \eqref{PPoK-DE2} and \eqref{pde:double}, respectively, we get the desired result.\\
For $m=0$, $ p_0(t) = e^{-k\lambda t}$.\\
\begin{align*}
\frac{d}{dt} \hat{p}_0(t) =& \int_0^{\infty} p_0(x) \frac{\partial}{\partial t} h(x,t)dx \\ 
=& \frac{1}{2\delta^2} \int_0^{\infty} p_0(x)\left[\frac{\partial^2}{\partial x^2}h(x,t)-2\delta \gamma \frac{\partial}{\partial x}h(x,t)- 2\delta^2h(x,0)\delta_0(t)\right] dx\\
=& \frac{1}{2\delta^2} \left[-2\delta \gamma p_0(0)h(0,t) + h(0,t) p_0'(0)+ \int_0^{\infty} p_0''(x)h(x,t)dx \right.\\ &\left.  - 2 \delta \gamma \{ -p_0(0)h(0,t)-\int_0^{\infty}p_0'(x)h(x,t)dx \}\right]-\delta_0(t)\hat{p}_0(0)\\
=& \frac{1}{2\delta^2}\left[\int_0^{\infty} (k\lambda)^2 p_0(x)h(x,t)dx-2k\lambda\delta\gamma \int_0^{\infty}p_0(x)h(x,t)dx-k\lambda h(0,t) \right] -\delta_0(t)\hat{p}_0(0)
\end{align*}
\end{proof}
\section{Application in Risk Theory}\label{sec:appl}
The classical insurance risk model is defined by 
\begin{equation*}
Z(t)=ct-\sum_{j=1}^{N(t)}Z_j, t\geq0,
\end{equation*}
where $\{N(t)\}_{t\geq 0}$ is the homogeneous Poisson process, which counts the number of claim arrivals upto time $t$ and $Z_j$ is the claim amount size with distribution $F$, independent of $N(t)$. The risk process models the cash flow of an insurance company where the premium rate is fixed at $c>0$. Though this models is simple and easy to use, but it does not cover all practical aspects of insurance ruin. In this section, we attempt to improve this model in following ways, namely,
\begin{enumerate}
	\item \textbf{Group insurance schemes}: Insurance companies sell group insurance policies for families, businesses and institutions, and \textit{etc.} where a single claim reporting implies several claims within a group. These situations can be modelled using PPoK (see \cite{Poiss-order-k}), where the claims arrive in groups of size less than or equal to $k$.
	\item \textbf{Ruin due to sudden large scale extreme events}: The classical Poisson process, as evident from its transition probability function, assigns extremely low probability to more than one event in a small time period. However, in practice, we have observed that natural and man-made calamities can force large number of claim arrivals in a short span of time. For example, after 9/11 attacks, the insurance companies were badly  affected by large scale claim arrivals in small time period. The Poisson process time-changed by L\'evy subordinator allows arbitrary arrivals in short span of time (see \cite{OrsToa-Berns,Orsingher2012}). 
\end{enumerate}
The model we proposed in this paper encapsulates the above improvements. Our proposed model reduces to group insurance scheme model when no time-change is done. It also covers sudden large scale extreme events when $k=1$ (in case of non group insurance schemes). In this section, we study ruin probability, joint distribution of time to ruin and deficit at ruin, and derive their governing equation based on our generalized model given below.


Let $ \{Q_f^{(1)}(t)\}_{t\geq 0}$ be the TCPPoK-I. Consider the risk model governed by the TCPPoK-I, denoted by $\{ X(t)\}_{t \geq 0}$, defined as
\begin{equation}\label{riskmodel}
 X(t)= ct-\sum_{j=1}^{Q_f^{(1)}(t)}Z_j,~~t\geq 0,
 \end{equation}
where $c>0$ denotes premium rate, which is assumed to be constant and $ Z_i$ be non-negative IID random variables with distribution $F$, representing the claim size.
The ratio of $\mathbb{E}[X(t)]$ and $ \mathbb{E}[\sum_{j=1}^{Q_f^{(1)}(t)} Z_j]$ is called premium loading factor, denoted by $\rho$, is given by
$$ \rho = \frac{ \mathbb{E}[X(t)]}{\mathbb{E}[\sum_{j=1}^{Q_f^{(1)}(t)} Z_j]} = \frac{ct}{\mu \mathbb{E}[Q_f^{(1)}(t)]}-1,$$
where $\mu=\mathbb{E}[Z_j]$. The premium loading factor signifies the profit margin of the insurance firm. 
Let us denote the initial capital by $u>0$. Define the surplus process  $\{U(t)\}_{t\geq 0}$ by
$$ U(t) = u+X(t),~~t\geq 0.$$ 
The insurance company will be called in ruin if the surplus process falls below zero level. Let $T$ denote the first time to ruin and is defined as
$$ T=\inf\{t>0: U(t)<0\}.$$
Then probability of ruin is given by
$$ \psi(u)=\mathbb{P}\{T<\infty\}.$$
The joint probability that ruin happens in finite time and the deficit at the time of ruin, which is denoted as $ D=|U(t)|$, is given by
\begin{equation}\label{jointpdf}
G(u,y)= \mathbb{P}\{T<\infty, D\leq y \},~~y\geq 0.
\end{equation}
Observe that
$$ \psi(u)=\lim_{y\rightarrow \infty} G(u,y).$$
Denote $u':=u+ch$. Now, using \eqref{tran-prob}, we get
\begin{align*}
G(u,y) =& (1-hf(k\lambda))G(u',y) -h\sum_{{\bf x} \in \Omega(k,\mathbf{1})}\frac{ (- \lambda )^{\zeta_k}}{\Pi_k!}f^{(\zeta_k)}(k\lambda)  \left[\int_0^{u'}G(u'-x,y)dF(x) + \right. \\& \left.  F(u'+y)-F(u') +\ldots +\int_0^{u'}G(u'-x,y)dF^{*k}(x) +  F^{*k}(u'+y)-F^{*k}(u')\right] \\
& -h \sum_{{\bf x} \in \Omega(k,\mathbf{2})} \frac{ (- \lambda )^{\zeta_k}}{\Pi_k!}f^{(\zeta_k)}(k\lambda) \left[\int_0^{u'}G(u'-x,y)dF(x)\right.  + F(u'+y)-F(u') +\ldots\\& \left.+ \int_0^{u'}G(u'-x,y)dF^{*k}(x) +   F^{*k}(u'+y)-F^{*k}(u')\right]\\& + \ldots \\&\vdots\hspace*{2cm}\vdots\hspace*{2cm}\vdots\hspace*{2cm}\vdots\hspace*{2cm}\vdots \\
=& (1-hf(k\lambda))G(u',y) \\&-h\sum_{n=1}^{\infty} \sum_{{\bf x} \in \Omega(k,n)}\frac{ (- \lambda )^{\zeta_k}}{\Pi_k!}f^{(\zeta_k)}(k\lambda) \left[ \sum_{i=1}^k \int_0^{u'}G(u'-x,y)dF^{*i}(x) + F^{*i}(u'+y)-F^{*i}(u') \right].
\end{align*}
After rearranging the terms, we have that
\begin{align*}
G(u',y)-G(u,y) =& hf(k\lambda)G(u',y)+ \\
& h\sum_{n=1}^{\infty} \sum_{{\bf x} \in	\Omega(k,n)}\frac{(-\lambda)^{\zeta_k}}{\Pi_k!}f^{(\zeta_k)}(k\lambda) \left[ \sum_{i=1}^k \int_0^{u'}G(u'-x,y)dF^{*i}(x) + F^{*i}(u'+y)-F^{*i}(u') \right]\\
\frac{G(u',y)-G(u,y)}{ch}
=& \frac{1}{c} f(k\lambda)G(u',y)+\\&  \frac{1}{c} \sum_{n=1}^{\infty} \sum_{{\bf x} \in \Omega(k,n)}\frac{ (- \lambda )^{\zeta_k}}{\Pi_k!}f^{(\zeta_k)}(k\lambda)  \left[ \sum_{i=1}^k \int_0^{u'}G(u'-x,y)dF^{*i}(x) + F^{*i}(u'+y)-F^{*i}(u') \right].
\end{align*}
Now taking $ h\rightarrow 0$, we get
\begin{align*}
\frac{\partial G}{\partial u} = &\frac{f(k\lambda)}{c}G(u,y)+\frac{1}{c} \sum_{n=1}^{\infty} \sum_{{\bf x} \in \Omega(k,n)}\frac{ (- \lambda )^{\zeta_k}}{\Pi_k!}f^{(\zeta_k)}(k\lambda)  \left[ \sum_{i=1}^k \int_0^{u}G(u-x,y)dF^{*i}(x) + F^{*i}(u+y)-F^{*i}(u) \right].
\shortintertext{The term in bracket in above expression, $F^{*i}$, represents the $i$-fold convolution of the claim size distribution. Let us denote the aggregate claims by  $B(x) = \sum_{i=1}^k F^{*i}(x)$  and normalizing it to a probability distribution by defining $B_1(x)= \frac{B(x)}{k}$, then we have}
\frac{\partial G}{\partial u} =&\frac{f(k\lambda)}{c}G(u,y)+ \frac{k}{c} \left[ \int_0^{u}G(u-x,y)dB_1(x) + B_1(u+y)-B_1(u)\right] \sum_{n=1}^{\infty} \sum_{{\bf x} \in \Omega(k,n)}\frac{ (- \lambda )^{\zeta_k}}{\Pi_k!}f^{(\zeta_k)}(k\lambda).
\end{align*}
Consider the last term in above expression, we obtain
\begin{align*}
\sum_{n=1}^{\infty} \sum_{{\bf x} \in \Omega(k,n)}\frac{ (- \lambda )^{\zeta_k}}{\Pi_k!}f^{(\zeta_k)}(k\lambda)=& \sum_{n=1}^{\infty} \sum_{\substack{x_1,x_2,\ldots x_k \geq 0\\ x_1+2x_2+\ldots+kx_k=n}}\frac{ (- \lambda )^{x_1+x_2+\ldots+x_k}}{x_1!x_2!\dots x_k!}f^{(x_1+x_2+\ldots+x_k)}(k\lambda).
\shortintertext{
Set  $x_i = n_i,  i=1,2,\ldots k$ and $n=x+\sum_{i=1}^k(i-1)n_i$. We get}
=& \sum_{x=1}^{\infty} \sum_{\substack{n_1,n_2,\ldots n_k \geq 0\\ n_1+n_2+\ldots+n_k=x}}\frac{ (- \lambda )^{n_1+n_2+\ldots+n_k}}{n_1!n_2!\dots n_k!}f^{(n_1+n_2+\ldots+n_k)}(k\lambda)
\\=& \sum_{x=1}^{\infty} \frac{(-\lambda)^x}{x!} f^{(x)}(k\lambda) \sum_{\substack{n_1,n_2,\ldots n_k \geq 0\\ n_1+n_2+\ldots+n_k=x}}\frac{(n_1+n_2+\ldots+n_k)!}{n_1!n_2!\dots n_k!}
 \\=& \sum_{x=1}^{\infty}\frac{(-\lambda)^x}{x!}f^{(x)}(k\lambda)(1+1+\ldots+1)^x =\sum_{x=1}^{\infty}\frac{(-\lambda k)^x}{x!}f^{(x)}(k\lambda) \\ =& \sum_{x=0}^{\infty}\frac{(-\lambda k)^x}{x!} f^{(x)}(k\lambda) -f(k\lambda).
\shortintertext{As $f$ is Bernstein function, it is infinitely differentiable and using Taylor's series, we get} =& f(k\lambda - k\lambda)-f(k\lambda)= -f(k\lambda) ~(\text{using }f(0)=0).
\end{align*}
\noindent From above calculations, we have the following result.
\begin{theorem} Let $G(u,y)$, defined in \eqref{jointpdf}, denote the joint probability distribution of time to ruin and deficit at this time of the risk model \eqref{riskmodel}. Then, it satisfies the following differential equation
\begin{equation}\label{appl:thmde}
\frac{\partial G(u,y)}{\partial u} = \frac{f(k\lambda)}{c}\left[ G(u,y)-k \left(\int_0^{u}G(u-x,y)dB_1(x) + B_1(u+y)-B_1(u)\right) \right].
\end{equation}
\end{theorem}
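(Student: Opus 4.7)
The plan is to derive the differential equation by a standard one-step conditioning argument: examine the behaviour of the process $\{U(t)\}_{t\geq 0}$ over an infinitesimal time interval $[0,h]$, using the TCPPoK-I transition probabilities in \eqref{tran-prob}, and then pass to the limit $h\to 0$. First I would write a recursion for $G(u,y)$ by conditioning on what happens in $[0,h]$: with probability $1-hf(k\lambda)+o(h)$ no claims occur and the capital grows to $u'=u+ch$, while for each $i=1,2,\ldots$, a jump of size $i$ in $Q_f^{(1)}$ occurs with the probability read off from \eqref{tran-prob}, and in that case $i$ IID claims $Z_1,\ldots,Z_i$ are subtracted. Summing over ruin immediately (claims exceed $u'$ but deficit $\leq y$) versus post-jump continuation (survives with reduced capital $u'-x$) yields the integrated terms $\int_0^{u'}G(u'-x,y)\,dF^{*i}(x)$ and $F^{*i}(u'+y)-F^{*i}(u')$.

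Next I would rearrange this identity to isolate the difference $G(u',y)-G(u,y)$, divide by $ch$, and let $h\to 0$. The left-hand side becomes $\partial G/\partial u$, and on the right-hand side one recognizes that the $k$ convolutions $F^{*i}$ for $i=1,\ldots,k$ can be packaged into the aggregate-claim distribution $B(x)=\sum_{i=1}^{k}F^{*i}(x)$ and normalized to $B_1=B/k$. This collapses the inner sum over $i$ into the single integral $\int_0^u G(u-x,y)\,dB_1(x)$ plus the boundary terms $B_1(u+y)-B_1(u)$, multiplied by an overall factor of $k$.

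The main obstacle, and the most delicate step, is evaluating the double sum
\[
S := \sum_{n=1}^{\infty}\sum_{\mathbf{x}\in \Omega(k,n)}\frac{(-\lambda)^{\zeta_k}}{\Pi_k!}f^{(\zeta_k)}(k\lambda),
\]
which is the coefficient of the post-jump term. The idea is to reparametrize by $x=\zeta_k=x_1+\cdots+x_k$, grouping all partitions with a fixed total $x$. The multinomial identity $\sum_{n_1+\cdots+n_k=x}\binom{x}{n_1,\ldots,n_k}=k^x$ converts the inner sum to $\frac{(-\lambda k)^x}{x!}f^{(x)}(k\lambda)$. Since $f$ is Bernstein (hence analytic on $(0,\infty)$), Taylor's series gives $\sum_{x=0}^{\infty}\frac{(-\lambda k)^x}{x!}f^{(x)}(k\lambda)=f(k\lambda-k\lambda)=f(0)=0$, so $S = -f(k\lambda)$. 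This negative sign is precisely what produces the subtraction inside the brackets of \eqref{appl:thmde}.

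Finally, substituting $S=-f(k\lambda)$ into the limiting integro-differential equation and factoring out $f(k\lambda)/c$ yields the stated PDE. No further analytic or measure-theoretic subtleties arise beyond the convergence needed to interchange sum and limit, which is justified because the probabilities sum to $1$ and each term is $O(h)$.
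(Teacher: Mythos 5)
Your proposal is correct and follows essentially the same route as the paper: a one-step conditioning argument via the transition probabilities \eqref{tran-prob}, division by $ch$ and passage to the limit, aggregation of the convolutions $F^{*i}$ into $B_1=\frac{1}{k}\sum_{i=1}^{k}F^{*i}$, and evaluation of the double sum $\sum_{n\ge 1}\sum_{\mathbf{x}\in\Omega(k,n)}\frac{(-\lambda)^{\zeta_k}}{\Pi_k!}f^{(\zeta_k)}(k\lambda)$ by regrouping over $\zeta_k$, applying the multinomial identity to obtain $k^{x}$, and invoking the Taylor expansion of the Bernstein function to conclude the sum equals $-f(k\lambda)$. Nothing essential is missing.
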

\begin{theorem}
The joint distribution of ruin time and deficit at ruin when the initial capital is zero, $G(0,y)$, is given by
\begin{equation}\label{appl:thm2}
 G(0,y)= \frac{f(k\lambda)}{c}\left[(k-1)\int_0^{\infty}G(u,y)du+ k \int_0^{\infty}[B_1(u+y)-B_1(u)]du\right].\end{equation}
\end{theorem}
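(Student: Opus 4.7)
The plan is to deduce the expression for $G(0,y)$ directly from the differential equation \eqref{appl:thmde} by integrating both sides with respect to $u$ over $(0,\infty)$. This is a natural move because the desired identity involves $\int_0^\infty G(u,y)\,du$ and $\int_0^\infty [B_1(u+y)-B_1(u)]\,du$, which are exactly the primitives one expects to obtain.

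First I would integrate the left-hand side of \eqref{appl:thmde} using the fundamental theorem of calculus,
\begin{equation*}
\int_0^\infty \frac{\partial G(u,y)}{\partial u}\,du = \lim_{u\to\infty} G(u,y) - G(0,y) = -G(0,y),
\end{equation*}
where I use that $\lim_{u\to\infty} G(u,y) = 0$ for each fixed $y\ge 0$. This limit is justified by the probabilistic meaning: $G(u,y)\le \psi(u)$, and an infinite initial capital makes ruin impossible (one can also obtain this from the standard net-profit condition, i.e.\ when the premium loading factor $\rho>0$, which we tacitly assume so that the model is non-trivial).

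Next, on the right-hand side I would integrate term by term. The term $\int_0^\infty G(u,y)\,du$ appears directly. For the convolution term I would apply Fubini's theorem (permissible since $G\ge 0$ and $B_1$ is a probability distribution) to swap the order of integration:
\begin{equation*}
\int_0^\infty \int_0^u G(u-x,y)\,dB_1(x)\,du = \int_0^\infty \left(\int_x^\infty G(u-x,y)\,du\right) dB_1(x) = \int_0^\infty G(v,y)\,dv,
\end{equation*}
after the substitution $v=u-x$ and using $\int_0^\infty dB_1(x)=1$. The remaining term $-k\int_0^\infty[B_1(u+y)-B_1(u)]\,du$ is kept as it is.

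Finally, I would assemble the pieces: the right-hand side becomes
\begin{equation*}
\frac{f(k\lambda)}{c}\left[\int_0^\infty G(u,y)\,du - k\int_0^\infty G(u,y)\,du - k\int_0^\infty [B_1(u+y)-B_1(u)]\,du\right],
\end{equation*}
and equating this to $-G(0,y)$ and rearranging yields \eqref{appl:thm2}. The only subtle points are the vanishing of $G(u,y)$ as $u\to\infty$ and the finiteness of $\int_0^\infty[B_1(u+y)-B_1(u)]\,du$ (which equals $\int_0^y (1-B_1(v))\,dv$ after an elementary manipulation and is finite provided the aggregate claim distribution has finite mean); these will be the main technical points to note, but neither is a deep obstacle.
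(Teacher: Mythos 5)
Your proposal is correct and follows essentially the same route as the paper: integrate \eqref{appl:thmde} in $u$ over $(0,\infty)$, use $G(\infty,y)=0$, and reduce the convolution term to $\int_0^\infty G(v,y)\,dv$ before rearranging. You merely make explicit (via Fubini and the remarks on finiteness and the net-profit condition) a few steps the paper leaves implicit.
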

\begin{proof}
On integrating \eqref{appl:thmde} with respect to $u$ on $ (0,\infty)$, we get
\begin{align*}
 G(\infty,y)-G(0,y)&= \frac{f(k\lambda)}{c} \left[\int_0^{\infty}G(u,y)du- k \left( \int_0^{\infty} \int_0^{u}G(u-x,y)dB_1(x)du \right.\right.+\\
& \left.\left. \int_0^{\infty}[B_1(u+y)-B_1(u)]du \right) \right].
 \end{align*}
Note that $G(\infty,y)=0$,then
$$  G(0,y)= \frac{f(k\lambda)}{c}\left[(k-1)\int_0^{\infty}G(u,y)du+ k \int_0^{\infty}[B_1(u+y)-B_1(u)]du\right].$$\qedhere
\end{proof}
\begin{remark}
On taking limit $y\rightarrow \infty $ in \eqref{appl:thm2}, we get
$$ \psi(0) = \frac{f(k\lambda)}{c}\left[(k-1)\int_0^{\infty}\psi(u)du+ k \int_0^{\infty}[1-B_1(u)]du\right].$$
\end{remark}
\begin{remark}
From \eqref{appl:thmde}, we have that
$$\frac{\partial G}{\partial u} = \frac{f(k\lambda)}{c}\left[ G(u,y)- k\left(\int_0^{u}G(u-x,y)dB_1(x) + B_1(u+y)-B_1(u)\right) \right].$$
As $\lim_{y\rightarrow \infty} G(u,y)= \psi(u)$, on taking limit as $ y \rightarrow \infty $ in the above equation, we obtain the  following differential equation governing the ruin probability
$$ \frac{\partial \psi}{\partial u} = \frac{f(k\lambda)}{c}\left[ \psi(u)- k\left(\int_0^{u}\psi(u-x)dB_1(x) + (1-B_1(u))\right) \right].$$
\end{remark}

\section{Simulation}\label{sec:simu}
\noindent In this section, we present the algorithm to generate simulated sample paths for some TCPPoK-I and TCPPoK-II processes. Using the algorithms presented here, we generate simulated sample paths for the PPoK, the TCPPoK-I subordinated with gamma and inverse Gaussian subordinator, and the TCPPoK-II subordinated with  inverse gamma  and inverse of inverse Gaussian subordinator for a chosen set of parameters. We first present the algorithm for simulation of sample paths of the PPoK.
\begin{algorithm}[\bf Simulation of the PPoK]\label{simulation-fpp}
	This algorithm (see \cite{Cahoy2010}) gives the number of events $N^{(k)}(t),t\geq 0$ of the PPoK up to a fixed time $T$. 
	\begin{enumerate}[(a)]
		\item Fix the parameters $\lambda>0$ and $k\geq1$ for the PPoK process.
		\item	Set $n=0, a=0$ and $t=0.$
		\item Repeat while $t<T$
		\begin{enumerate}
			\item[] Generate a uniform random variables $U$.
			\item[] Compute $t\leftarrow t+\left[-\frac{1}{\lambda}\ln U\right]$.
			\item[] Generate an independent random variable $X$ with discrete uniform distribution on $k$ points.
			\item[]  $a\leftarrow a+X$ and $n\leftarrow n+1$.
			
		\end{enumerate}
		\item Next $t$.
	\end{enumerate}
	Then $n$ denotes the number of events $N^{(k)}(t)$ occurred up to time $T$.
\end{algorithm} 
\noindent We next present a general algorithm to simulate the TCPPoK-I, subordinated with gamma subordinator  and the inverse Gaussian subordinator.
The same algorithm can be used to simulate the TCPPoK-II, subordinated with inverse gamma and inverse of inverse Gaussian processes. We refer to Algorithm 2--5 from \cite{TCFPP-pub} to generate sample paths of the gamma and the inverse Gaussian subordinator and their right-continuous inverses.
\begin{algorithm}[\bf  Simulation of the TCPPoK-I and the TCPPoK-II]
	\begin{enumerate}[(a)]
		\item[]
		
		\item Fix the parameters for the subordinator (inverse subordinator), under consideration. Choose  $\lambda>0$ and  order $k$ for the PPoK.
		\item  Fix the time $T$ for the time interval $[0,T]$ and choose $n+1$ uniformly spaced time points $0=t_0,t_1,\ldots,t_n=T$ with $h=t_2-t_1$.
		\item Simulate the values $W(t_i),1\leq i\leq n,$ of the subordinator (inverse subordinator) at $t_1,\ldots t_n,$ using the Algorithm 2--5 of \cite{TCFPP-pub} for respective subordinator (inverse subordinator).
		\item  Using the values $W(t_i),1\leq i\leq n,$ generated in Step (c), as time points, compute the number of events of the PPoK $\{N^{(k)}(W(t_i))\},1\leq i\leq n,$ using Algorithm \ref{simulation-fpp}.
	\end{enumerate}
\end{algorithm}
\noindent Let $\lambda=1.2$ and $T=10$ be fixed for the simulated sample paths presented in this section below. 
\begin{figure}[htb]
	\begin{subfigure}{0.5\textwidth}
		\caption{Parameters:  $k=3$}
		\includegraphics[width=0.8\linewidth]{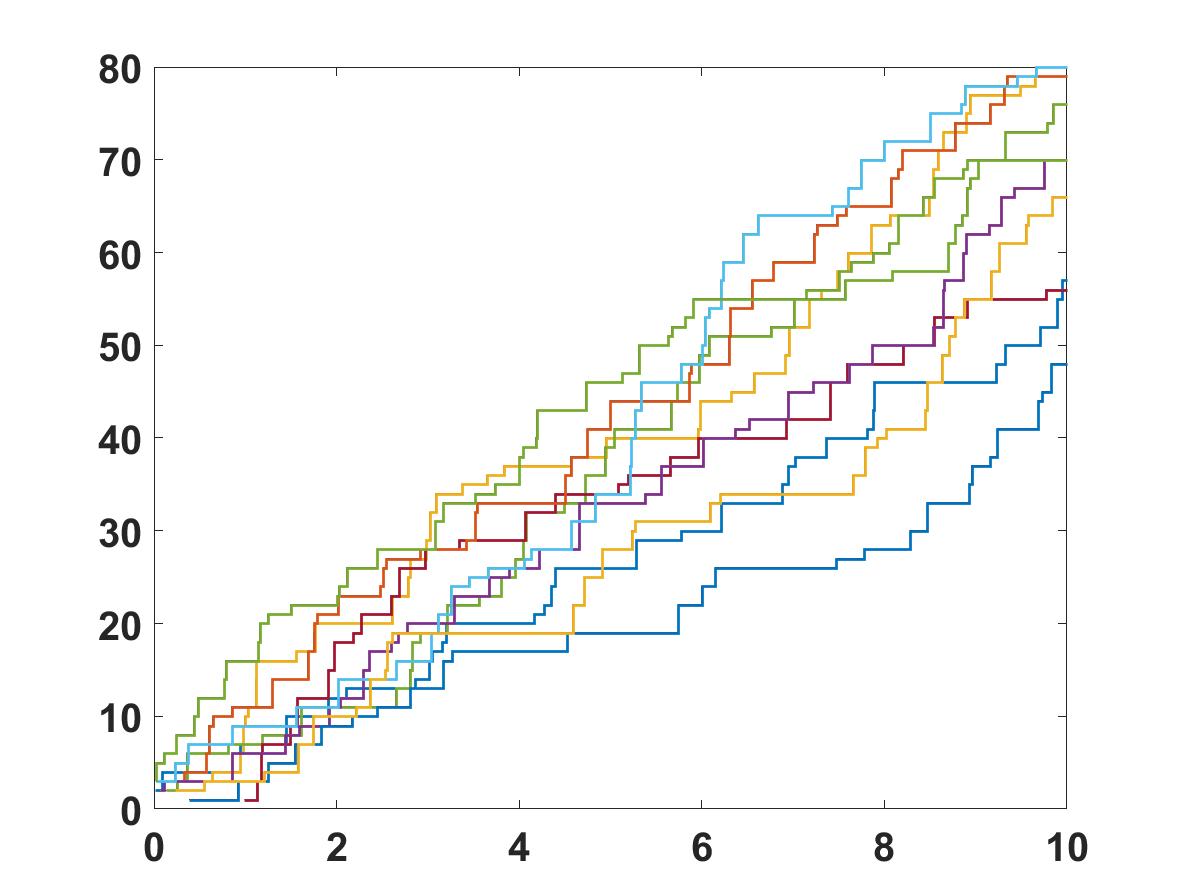}	\end{subfigure}~~
	\begin{subfigure}{0.5\textwidth}
		\caption{Parameters:  $k=5$}
		\includegraphics[width=0.8\linewidth]{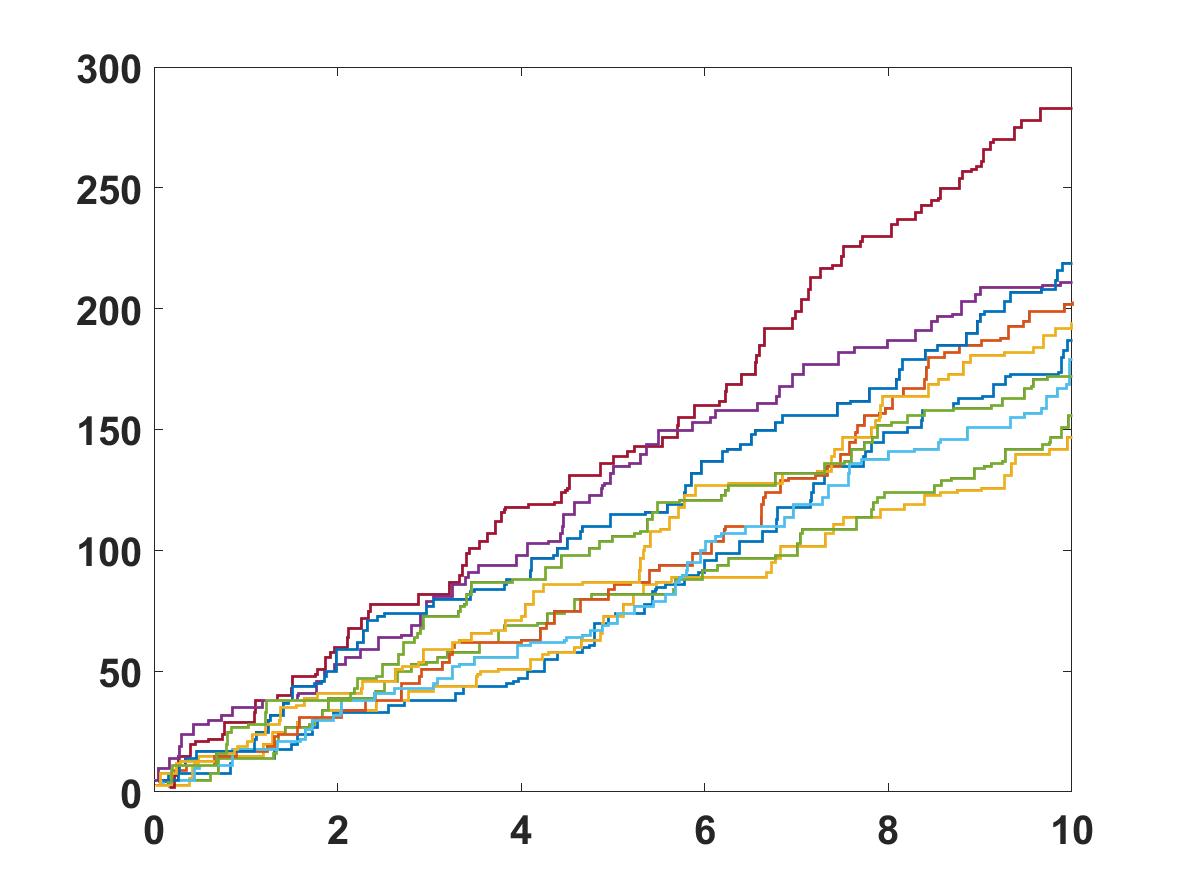}
	\end{subfigure}
	\caption{\label{fig:ppok} Ten simulated sample paths of the PPoK process for order {\tiny(A)} $k=3$, and {\tiny(B)} $k=5$}
\end{figure}
\begin{figure}[!htb]
	\begin{subfigure}{0.5\textwidth}

		\includegraphics[width=0.8\linewidth]{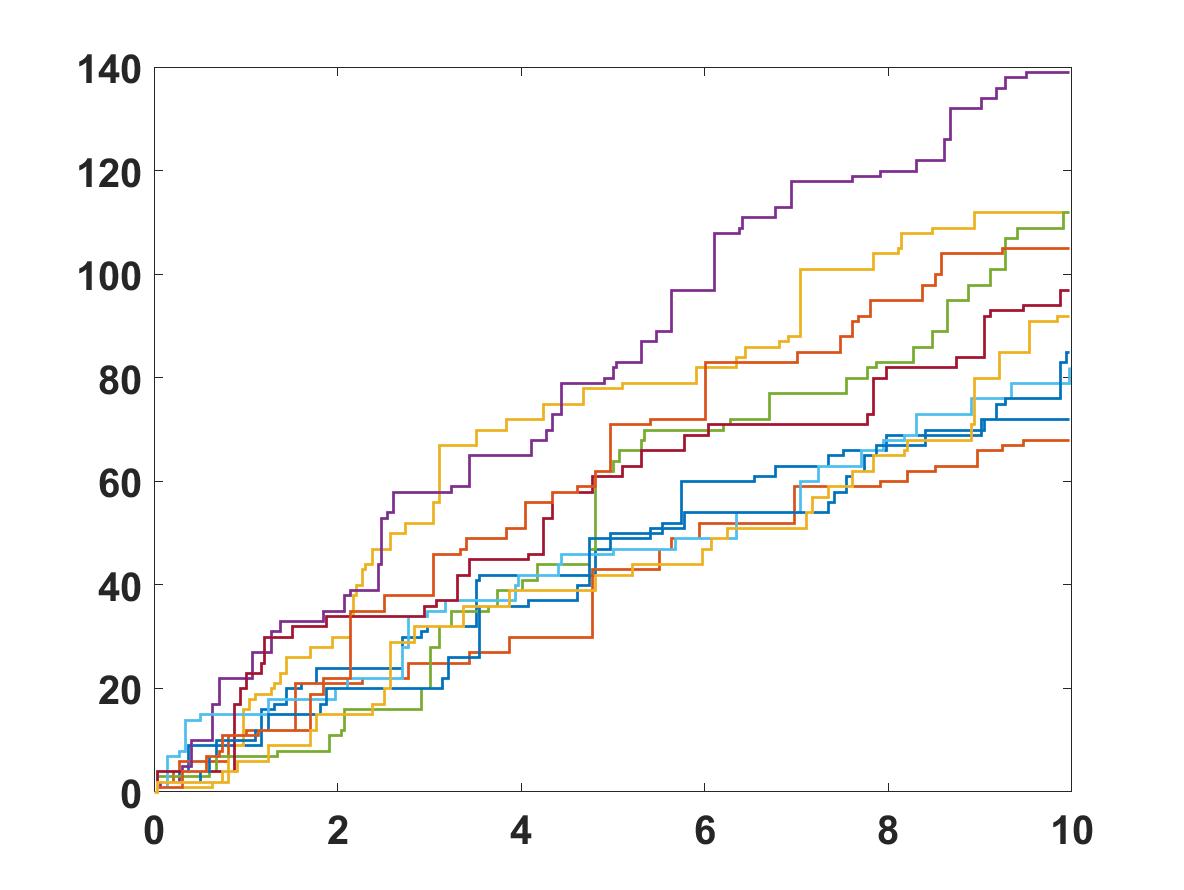}	
			\caption{Parameters: $\alpha=3.0,p=4.0,k=3$}	\end{subfigure}~~
	\begin{subfigure}{0.5\textwidth}

				\includegraphics[width=0.8\linewidth]{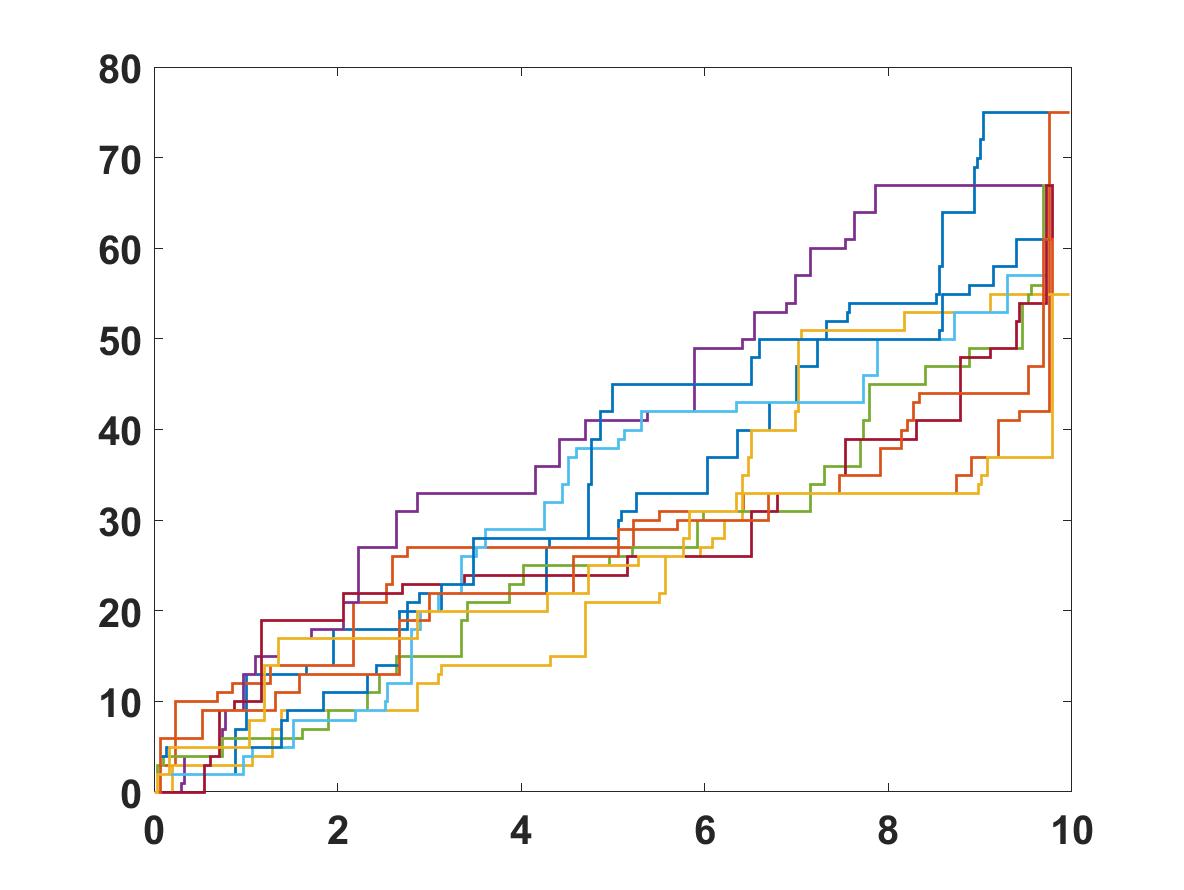}		\caption{Parameters: $\alpha=3.0,p=4.0,k=3$}
	\end{subfigure}
	\caption{\label{fig:ppokgamma}Ten simulated sample paths of time-changed PPoK with {\tiny(A)} gamma subordinator, and {\tiny(B)} inverse gamma subordinator.}
\end{figure}

\begin{figure}[!htb]
	\begin{subfigure}{0.5\textwidth}

		\includegraphics[width=0.8\linewidth]{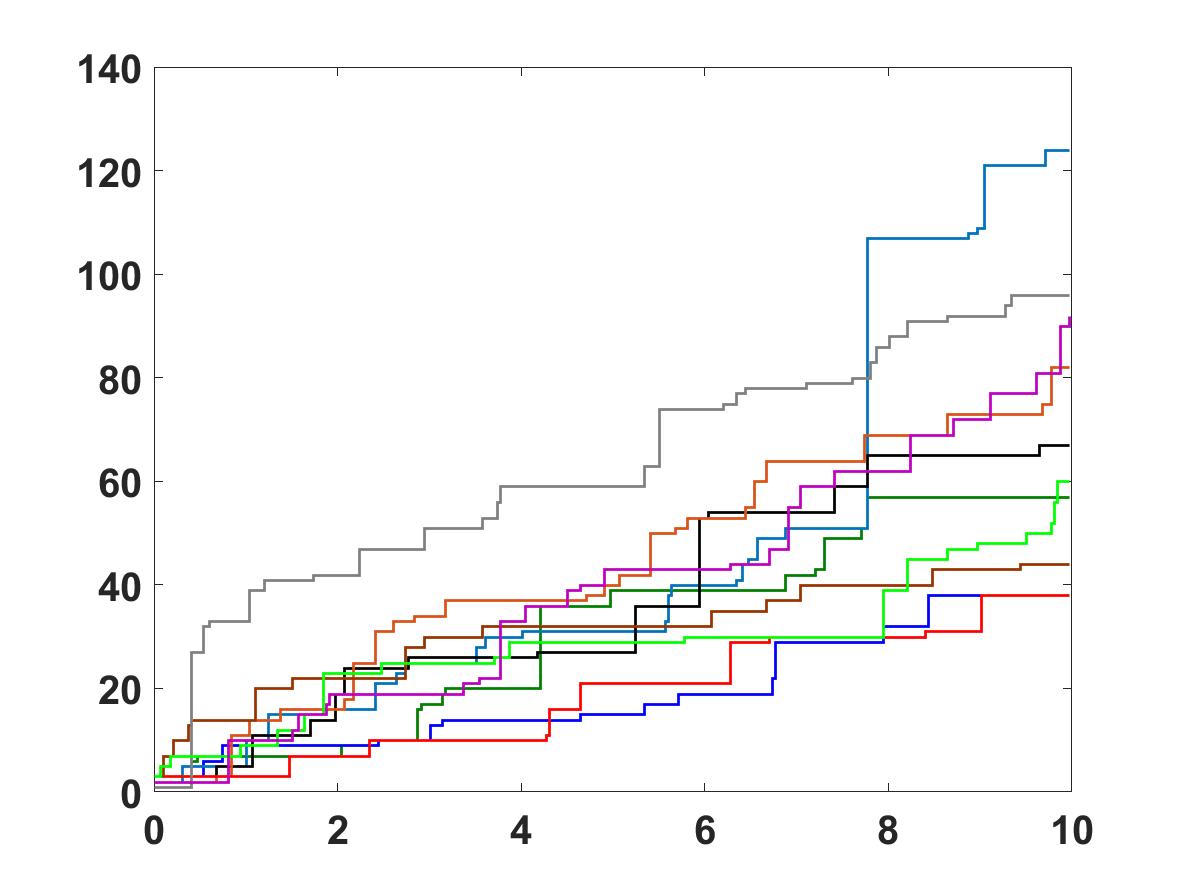}
	\caption{Parameters: $\gamma=1,\delta=1,k=3$}	\end{subfigure}~~
	\begin{subfigure}{0.5\textwidth}

		\includegraphics[width=0.8\linewidth]{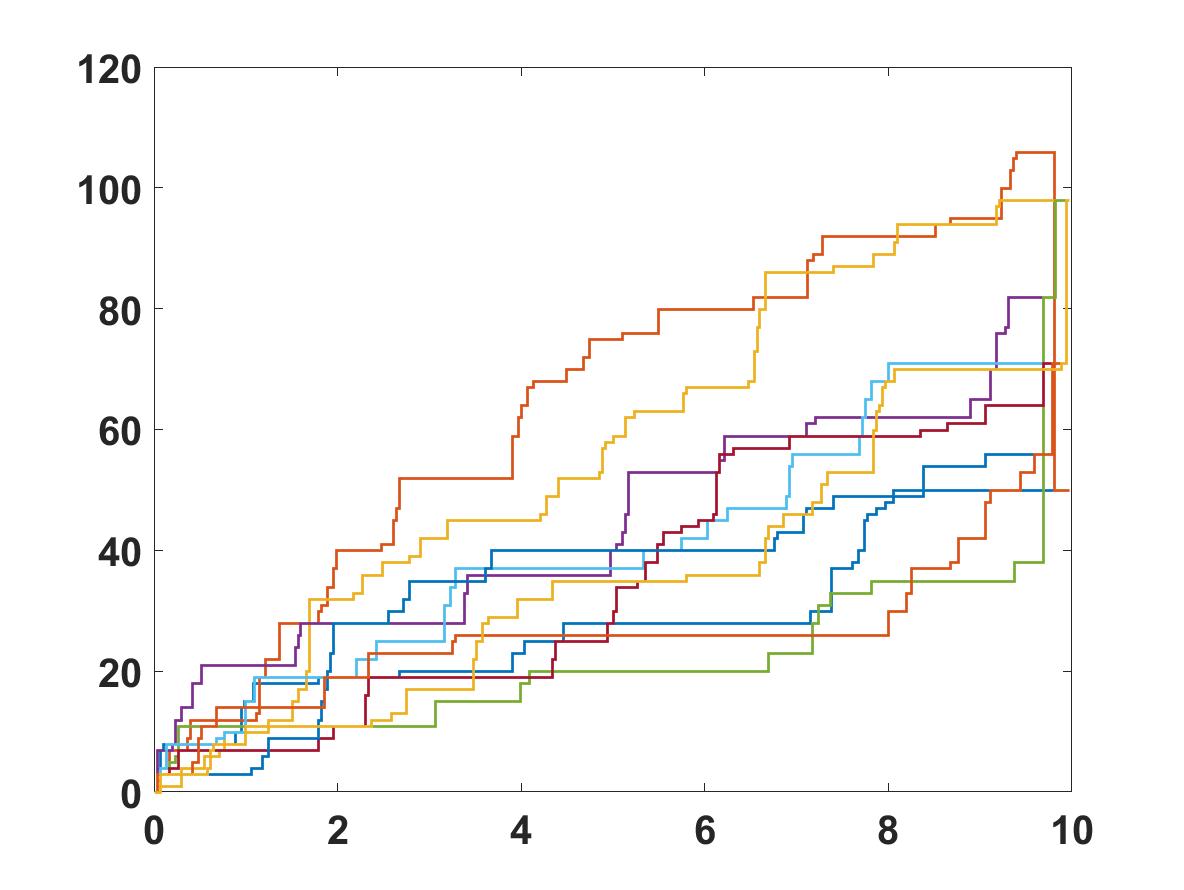}
		\caption{Parameters: $\gamma=1,\delta=1,k=3$}
	\end{subfigure}
	\caption{\label{fig:ppokIG}Ten simulated sample paths time-changed PPoK with {\tiny(A)} inverse Gaussian subordinator, and {\tiny(B)} inverse of inverse Gaussian subordinator. }
\end{figure}
\FloatBarrier
\subsection*{Interpretation of plots}
The PPoK is interpreted as arrival coming in packets of size $k$. As it is clear from Figure \ref{fig:ppok}, as the packet size $k$ is increased from 3 to 5, the number of arrivals increased. The effect of time-change by subordinator in PPoK is clearly visible in Figure {\small\ref{fig:ppokgamma}{\tiny (A)} and {\small \ref{fig:ppokIG}{\tiny (A)}} as the arrival rate of the packets increases compared with Figure  {\small\ref{fig:ppok}{\tiny (A)}. While if we observe the effect of inverse subordinator in  {\small\ref{fig:ppokgamma}{\tiny (B)} and {\small \ref{fig:ppokIG}{\tiny (B)}}, we find that the waiting time between events are increased predominantly compared to Figure  {\small\ref{fig:ppok}{\tiny (A)}.
\section*{Acknowledgement}
\noindent		
The authors are grateful to Prof. Enzo Orsingher for several helpful comments and suggestions which improved the quality of the article.
\bibliographystyle{abbrv}
\bibliography{researchbib}

\def\cprime{$'$}
\begin{thebibliography}{10}

\bibitem{Allouba02}
H.~Allouba.
\newblock Brownian-time processes: the {PDE} connection. {II}. {A}nd the
  corresponding {F}eynman-{K}ac formula.
\newblock {\em Trans. Amer. Math. Soc.}, 354(11):4627--4637 (electronic), 2002.

\bibitem{Allouba-Zheng01}
H.~Allouba and W.~Zheng.
\newblock Brownian-time processes: the {PDE} connection and the half-derivative
  generator.
\newblock {\em Ann. Probab.}, 29(4):1780--1795, 2001.

\bibitem{appm}
D.~Applebaum.
\newblock {\em L\'evy Processes and Stochastic Calculus}, volume 116 of {\em
  Cambridge Studies in Advanced Mathematics}.
\newblock Cambridge University Press, Cambridge, second edition, 2009.

\bibitem{Baeum-Meersch-Nane09}
B.~Baeumer, M.~M. Meerschaert, and E.~Nane.
\newblock Brownian subordinators and fractional {C}auchy problems.
\newblock {\em Trans. Amer. Math. Soc.}, 361(7):3915--3930, 2009.

\bibitem{Barn-Niel97}
O.~E. Barndorff-Nielsen.
\newblock Normal inverse {G}aussian distributions and stochastic volatility
  modelling.
\newblock {\em Scand. J. Statist.}, 24(1):1--13, 1997.

\bibitem{Barn-Niel98}
O.~E. Barndorff-Nielsen.
\newblock Processes of normal inverse {G}aussian type.
\newblock {\em Finance Stoch.}, 2(1):41--68, 1998.

\bibitem{beghinejp2009}
L.~Beghin and E.~Orsingher.
\newblock Fractional {P}oisson processes and related planar random motions.
\newblock {\em Electron. J. Probab.}, 14:no. 61, 1790--1827, 2009.

\bibitem{bertoin}
J.~Bertoin.
\newblock {\em L\'evy processes}, volume 121 of {\em Cambridge Tracts in
  Mathematics}.
\newblock Cambridge University Press, Cambridge, 1996.

\bibitem{Bochner}
S.~Bochner.
\newblock Diffusion equation and stochastic processes.
\newblock {\em Proceedings of the National Academy of Sciences of the United
  States of America.}, 35(7):368--370, 1949.

\bibitem{Cahoy2010}
D.~O. Cahoy, V.~V. Uchaikin, and W.~A. Woyczynski.
\newblock Parameter estimation for fractional {P}oisson processes.
\newblock {\em J. Statist. Plann. Inference}, 140(11):3106--3120, 2010.

\bibitem{subordinator:fin4}
L.~Calvet, B.~Mandelbrot, and A.~J. Fisher.
\newblock A multifractal model of asset returns.
\newblock Working papers, HAL, 2011.

\bibitem{subordinator:fin1}
P.~K. Clark.
\newblock A subordinated stochastic process model with finite variance for
  speculative prices.
\newblock {\em Econometrica}, 41(1):135--155, 1973.

\bibitem{ContTan2004}
R.~Cont and P.~Tankov.
\newblock {\em Financial modelling with jump processes}.
\newblock Chapman \& Hall/CRC Financial Mathematics Series. Chapman \&
  Hall/CRC, Boca Raton, FL, 2004.

\bibitem{subordinator:fin3}
M.~M. Dacorogna, U.~A. Müller, R.~J. Nagler, R.~B. Olsen, and O.~V. Pictet.
\newblock A geographical model for the daily and weekly seasonal volatility in
  the foreign exchange market.
\newblock {\em Journal of International Money and Finance}, 12(4):413 -- 438,
  1993.

\bibitem{sub:phy4}
B.~Dybiec and E.~Gudowska-Nowak.
\newblock Subordinated diffusion and continuous time random walk asymptotics.
\newblock {\em Chaos: An Interdisciplinary Journal of Nonlinear Science},
  20(4):043129, 2010.

\bibitem{sub:phy2}
R.~Failla, P.~Grigolini, M.~Ignaccolo, and A.~Schwettmann.
\newblock Random growth of interfaces as a subordinated process.
\newblock {\em Phys. Rev. E}, 70:010101, Jul 2004.

\bibitem{sub:bio}
I.~Golding and E.~C. Cox.
\newblock Physical nature of bacterial cytoplasm.
\newblock {\em Phys. Rev. Lett.}, 96:098102, Mar 2006.

\bibitem{Hahn-Kobaya-Umarov11}
M.~G. Hahn, K.~Kobayashi, and S.~Umarov.
\newblock Fokker-{P}lanck-{K}olmogorov equations associated with time-changed
  fractional {B}rownian motion.
\newblock {\em Proc. Amer. Math. Soc.}, 139(2):691--705, 2011.

\bibitem{Poiss-order-k}
K.~Y. Kostadinova and L.~D.Minkova.
\newblock On the {P}oisson process of order $k$.
\newblock {\em Pliska Stud. Math. Bulgar.}, 22, 2012.

\bibitem{Kumar-TCPP}
A.~Kumar, E.~Nane, and P.~Vellaisamy.
\newblock Time-changed poisson processes.
\newblock {\em Statistics \& Probability Letters}, 81(12):1899 -- 1910, 2011.

\bibitem{Kumar-ITSS}
A.~Kumar and P.~Vellaisamy.
\newblock Inverse tempered stable subordinators.
\newblock {\em Statistics \& Probability Letters}, 103:134 -- 141, 2015.

\bibitem{lask}
N.~Laskin.
\newblock Fractional {P}oisson process.
\newblock {\em Commun. Nonlinear Sci. Numer. Simul.}, 8(3-4):201--213, 2003.
\newblock Chaotic transport and complexity in classical and quantum dynamics.

\bibitem{lrd2016}
A.~Maheshwari and P.~Vellaisamy.
\newblock On the long-range dependence of fractional poisson and negative
  binomial processes.
\newblock {\em J. Appl. Probab.}, 53:989--1000, 2016.

\bibitem{TCFPP-pub}
A.~Maheshwari and P.~Vellaisamy.
\newblock Fractional poisson process time-changed by l{\'e}vy subordinator and
  its inverse.
\newblock {\em Journal of Theoretical Probability}, Dec 2017.

\bibitem{Mandelbrot01}
B.~B. Mandelbrot.
\newblock Scaling in financial prices: I. tails and dependence.
\newblock {\em Quantitative Finance}, 1(1):113--123, 2001.

\bibitem{subordinator:fin2}
C.~Marinelli, S.~Rachev, and R.~Roll.
\newblock Subordinated exchange rate models: evidence for heavy tailed
  distributions and long-range dependence.
\newblock {\em Mathematical and Computer Modelling}, 34(9):955 -- 1001, 2001.

\bibitem{Meersch-Koz-Molz-Lu04}
M.~M. Meerschaert, T.~J. Kozubowski, F.~J. Molz, and S.~Lu.
\newblock {Fractional Laplace model for hydraulic conductivity}.
\newblock {\em Geophys. Res. Lett.}, 31:L08501, 2004.

\bibitem{mnv}
M.~M. Meerschaert, E.~Nane, and P.~Vellaisamy.
\newblock The fractional {P}oisson process and the inverse stable subordinator.
\newblock {\em Electron. J. Probab.}, 16:no. 59, 1600--1620, 2011.

\bibitem{sub:phy1}
M.~G. Nezhadhaghighi, M.~A. Rajabpour, and S.~Rouhani.
\newblock First-passage-time processes and subordinated schramm-loewner
  evolution.
\newblock {\em Phys. Rev. E}, 84:011134, Jul 2011.

\bibitem{Orsingher2012}
E.~Orsingher and F.~Polito.
\newblock Compositions, random sums and continued random fractions of poisson
  and fractional poisson processes.
\newblock {\em Journal of Statistical Physics}, 148(2):233--249, Aug 2012.

\bibitem{sfpp}
E.~Orsingher and F.~Polito.
\newblock The space-fractional {P}oisson process.
\newblock {\em Statist. Probab. Lett.}, 82(4):852--858, 2012.

\bibitem{OrsToa-Berns}
E.~Orsingher and B.~Toaldo.
\newblock Counting processes with bernstein intertimes and random jumps.
\newblock {\em Journal of Applied Probability}, 52(4):1028--1044, 2015.

\bibitem{phil1984}
A.~N. Philippou.
\newblock Poisson and compound poisson distributions of order k and some of
  their properties.
\newblock {\em Journal of Soviet Mathematics}, 27(6):3294--3297, Dec. 1984.

\bibitem{phili83-geo}
A.~N. Philippou, C.~Georghiou, and G.~N. Philippou.
\newblock A generalized geometric distribution and some of its properties.
\newblock {\em Statistics \& Probability Letters}, 1(4):171 -- 175, 1983.

\bibitem{sato}
K.~Sato.
\newblock {\em L\'evy {P}rocesses and {I}nfinitely {D}ivisible
  {D}istributions}, volume~68 of {\em Cambridge Studies in Advanced
  Mathematics}.
\newblock Cambridge University Press, Cambridge, 1999.
\newblock Translated from the 1990 Japanese original, Revised by the author.

\bibitem{Sato2001}
K.-i. Sato.
\newblock Subordination and self-decomposability.
\newblock {\em Statist. Probab. Lett.}, 54(3):317--324, 2001.

\bibitem{sub:eco}
H.~Scher, G.~Margolin, R.~Metzler, J.~Klafter, and B.~Berkowitz.
\newblock The dynamical foundation of fractal stream chemistry: The origin of
  extremely long retention times.
\newblock {\em Geophysical Research Letters}, 29(5):5--1--5--4.

\bibitem{Bernstein-book}
R.~L. Schilling, R.~Song, and Z.~Vondra{\v{c}}ek.
\newblock {\em Bernstein functions}, volume~37 of {\em de Gruyter Studies in
  Mathematics}.
\newblock Walter de Gruyter \& Co., Berlin, second edition, 2012.
\newblock Theory and applications.

\bibitem{sub:phy3}
A.~Stanislavsky and K.~Weron.
\newblock Two-time scale subordination in physical processes with long-term
  memory.
\newblock {\em Annals of Physics}, 323(3):643 -- 653, 2008.

\bibitem{Taqqu2010}
M.~Veillette and M.~S. Taqqu.
\newblock Numerical computation of first passage times of increasing {L}\'evy
  processes.
\newblock {\em Methodol. Comput. Appl. Probab.}, 12(4):695--729, 2010.

\bibitem{Kumar-Hitting}
P.~Vellaisamy and A.~Kumar.
\newblock First-exit times of an inverse gaussian process.
\newblock {\em Stochastics}, 90(1):29--48, 2018.

\bibitem{fnbpfp}
P.~Vellaisamy and A.~Maheshwari.
\newblock Fractional negative binomial and {P}olya processes.
\newblock {\em Probab. Math. Statist.}, 38(1):77--101, 2018.

\end{thebibliography}
\end{document}